\documentclass[a4paper,12pt]{article}

\usepackage{amsmath}
\usepackage{amssymb}
\usepackage{amsthm}
\usepackage{graphicx}
\usepackage{enumerate}
\usepackage[colorlinks=true, linkcolor=blue,
            citecolor=green, urlcolor=blue]{hyperref}
            \usepackage{etoolbox}
\AtBeginDocument{\raggedbottom}

\newtheorem{theorem}{Theorem}[section]

\newtheorem{remark}[theorem]{Remark}

\theoremstyle{definition}
\newtheorem{definition}[theorem]{Definition}

\begin{document}
	
\title{On the Eigenvalues of the Biharmonic Steklov Problem on a Thin Set}
\author{
  \small Bauyrzhan Derbissaly \\
  \small Institute of Mathematics and Mathematical Modeling \\
  \small 125 Pushkin Street, Almaty, Kazakhstan \\
  \small \texttt{derbissaly@math.kz}
  \and
  \small Nurbek Kakharman \\
  \small SDU university\\
  \small    1/1 Abylaikhan street, Kaskelen, Kazakhstan\\\small Institute of Mathematics and Mathematical Modeling \\
  \small 125 Pushkin Street, Almaty, Kazakhstan \\
  \small \texttt{n.kakharman@math.kz;~~nurbek.kakharman@sdu.edu.kz}
}
\date{}
\maketitle


\begin{abstract}
\noindent
This paper investigates the asymptotic behavior of the eigenvalues of the biharmonic operator on a thin set with Steklov boundary condition. The thin set is taken to be a tubular neighborhood of a planar smooth domain. We show that, as the thickness of this neighborhood tends to zero, all eigenvalues of the biharmonic operator with Steklov boundary condition converge to zero.
\end{abstract}


\begin{flushleft} \small
  \textbf{2020 MSC:} 35J40, 35P20.\\
  \textbf{Keywords:} biharmonic operator, Steklov boundary condition, thin domain.
\end{flushleft}


\section{\large Introduction and statement of the main result}\label{sect. 1}
\noindent
We are interested in analyzing the asymptotic behavior of the eigenvalues of the biharmonic Steklov problem 
on a thin set. For $\varepsilon>0$, we define the domain $\omega_\varepsilon$ as
\[
\omega_\varepsilon := \{x \in \Omega : \text{dist}(x,\partial \Omega) < \varepsilon\},
\]
where $\Omega$ is a bounded domain in $\mathbb{R}^2$ with smooth boundary $\partial\Omega$.
In $\omega_\varepsilon$ we consider the following Steklov problem for the biharmonic operator
\begin{equation}\label{2}
\begin{cases}
\Delta^2 u_\varepsilon = 0, 
&\text{in }\omega_\varepsilon,\\
\partial_{\nu \nu}^2u_\varepsilon=-\mu\partial_{\nu} u_\varepsilon, 
&\text{on }\partial\omega_\varepsilon,\\
-\text{div}_{\partial\omega_\varepsilon}\left(D^2u_\varepsilon\cdot\nu\right)_{\partial\omega_\varepsilon}-\partial_{\nu}\Delta u_\varepsilon=\lambda_\varepsilon u_\varepsilon,
&\text{on }\partial\omega_\varepsilon,
\end{cases}
\end{equation}
in the unknowns $u_\varepsilon$ (the Steklov eigenfunction) and $\lambda_\varepsilon$ (the Steklov eigenvalue).  
Here, $\mu>0$ is fixed constant, and $\nu$ denotes the outer unit normal to $\omega_\varepsilon$, $D^2 u_\varepsilon$ denotes the Hessian of $u_\varepsilon$, $\text{div}_{\partial \omega_\varepsilon}$ denotes the tangential divergence on $\partial \Omega$, and $(D^2 u_\varepsilon \cdot \nu)_{\partial \omega_\varepsilon}$ denotes the projection of $D^2 u_\varepsilon \cdot \nu$ on the tangent space $T \partial \omega_\varepsilon$.

A weak formulation of problem \eqref{2} has the form
\[
\int_{\omega_\varepsilon}D^2u_\varepsilon:D^2\varphi dx+\mu\int_{\partial\omega_\varepsilon}\partial_\nu u_\varepsilon\partial_\nu\varphi d\sigma =
\lambda_\varepsilon\int_{\partial\omega_\varepsilon}u_\varepsilon\varphi d\sigma,    
\]
for all $\varphi\in H^2(\omega_\varepsilon)$. By
$
D^2u : D^2v:=
\sum_{i,j=1}^2
\partial_{x_ix_j}^2 u\partial_{x_ix_j}^2 v
$
we denote the standard product of Hessians.

Since \(\partial\Omega\) is smooth, there exists \(\varepsilon_0>0\) such that for all \(\varepsilon\in(0,\varepsilon_0)\), the set \(\omega_\varepsilon\) is also a smooth domain. In that range, problem \eqref{2} is well‐posed and admits a discrete sequence of nonnegative eigenvalues diverging to \(+\infty\):
\[
0=\lambda_{\varepsilon,0}<\lambda_{\varepsilon,1}\leq\ \cdot\cdot\cdot\leq \lambda_{\varepsilon,n}\leq...
\] 
with corresponding eigenfunctions \(u_{\varepsilon,k}\) normalized in \(L^2(\partial\omega_\varepsilon)\).

The goal of this paper is to describe the asymptotic behavior of the solutions \((u_{\varepsilon,k},\,\lambda_{\varepsilon,k})\) of \eqref{2} as \(\varepsilon\to0\). When \(\varepsilon\) is small, we refer to \(\omega_\varepsilon\) as a “thin domain”, which collapses to the curve \(\partial\Omega\) as \(\varepsilon\to0\).

In the limiting case \(\mu=+\infty\), problem \eqref{2} reduces to the (NBS)‐Neumann Biharmonic Steklov problem:
\[
\begin{cases}
\Delta^2 u_\varepsilon = 0, 
&\text{in }\omega_\varepsilon,\\
\partial_{\nu} u_\varepsilon=0, 
&\text{on }\partial\omega_\varepsilon,\\
-\text{div}_{\partial\omega_\varepsilon}\left(D^2u_\varepsilon\cdot\nu\right)_{\partial\omega_\varepsilon}-\partial_{\nu}\Delta u_\varepsilon=\lambda_\varepsilon u_\varepsilon,
&\text{on }\partial\omega_\varepsilon.
\end{cases}
\]
Recall that the case \(\mu=0\) in \eqref{2} was introduced in \cite{buopro} as a natural fourth‐order extension of the classical Steklov problem for the Laplacian.

Our study is motivated by \cite{ferpro}, where the authors examined the asymptotic behavior of eigenvalues for the biharmonic operator with Neumann boundary conditions on the same thin set \(\omega_\varepsilon\). They proved that, as \(\varepsilon\to0\), those eigenvalues converge to the eigenvalues of a limiting system of differential equations. In contrast, under the Steklov boundary conditions in \eqref{2}, the limiting problem reduces to a single differential equation, and all eigenvalues tend to zero. To facilitate comparison with \cite{ferpro}, we follow a similar strategy in our analysis.

In classical plate theory a Steklov boundary condition appears when the inertia of the body is concentrated along the edge: for transverse vibrations of a free plate with line mass density on $\partial\Omega$, separation in time shows that the boundary traction must be proportional to the trace of the displacement, and the proportionality constant becomes the spectral parameter on $\partial\Omega$; this is exactly the biharmonic Steklov model (see, \cite{buopro}).

Beyond classical elasticity, an analogous idea applies in micropolar (Cosserat) elasticity in the sense of Eringen \cite{Eringen1968}, \cite{Eringen1999}. There the primary fields are the displacement $\mathbf u$ and the microrotation $\mathbf \vartheta$, the stresses are accompanied by couple-stresses, and the natural boundary data are the traction and the couple-traction. The theory also includes a (bulk) microinertia accounting for rotational kinetic energy of the microstructure. If, in the same spirit as above, one models a thin structure whose edge carries (part of) the translational line mass and rotational microinertia, the time-harmonic balance on $\partial\Omega$ leads to Steklov-type laws on the boundary in which the spectral parameter multiplies the traces of $(\mathbf u,\mathbf \vartheta)$. Physically, the Steklov condition then encodes that a non-negligible fraction of kinetic energy-translational and rotational-resides on the boundary, and eigenmodes are selected by the competition between bulk bending energy and this edge inertia. We record this micropolar interpretation only as motivation; our analysis below concerns the biharmonic case with $\sigma=0$.

The asymptotic analysis of eigenvalue problems on singularly perturbed domains, particularly thin domains, has been a subject of intense study in spectral theory and partial differential equations, see, for example, \cite{JM}, \cite{VP}, \cite{FP},  \cite{CD}, \cite{GA}, \cite{JC}, \cite{SA}, \cite{MC}, \cite{Miura}, \cite{Li}, \cite{Nogueira}, \cite{Antonio}, \cite{Nakasato} and the references therein. A classical body of work concerns the Laplace operator with various boundary conditions, where the transition from a thin domain to its limiting lower-dimensional object is often achieved via a delicate analysis of the associated energy forms, see e.g., \cite{Henry,G} for Neumann conditions, and \cite{Grieser} for Dirichlet and Robin conditions. The common theme is the identification of a limiting operator on the collapsed structure whose spectrum captures the asymptotic behavior of the original eigenvalues.

Throughout this paper, we set the Poisson ratio \(\sigma=0\), as in \cite{ferpro}. In two dimensions, \(\sigma\in(-1,1)\) in general, and choosing \(\sigma\ne0\) would alter the quadratic form appearing in the weak formulation \eqref{2}. Although one could treat the more general case \(\sigma\in(-1,1)\), passing to the limit as \(\varepsilon\to0\) becomes substantially more involved (see \cite{PDL}). Hence, we restrict our attention to the emblematic case \(\sigma=0\).

The main result of this paper is the following theorem:

\begin{theorem}\label{thm:1.1}
Let \(\{\lambda_{\varepsilon,k}\}_{k\in\mathbb{N}^+}\) be the sequence of eigenvalues of problem \eqref{2}. Then, as \(\varepsilon\to0\),
\[
\lambda_{\varepsilon,k}
\;\sim\;\lambda_k\,\varepsilon,
\]
where \(\lambda_k\) is the \(k\)‐th eigenvalue of the following one‐dimensional problem
\begin{equation}\label{3}
\begin{cases}
u^{(4)}-2\bigl(\kappa^2u'\bigr)'=2\lambda u,
&\text{in }(0,|\partial\Omega|),\\
u^{(k)}(0)=u^{(k)}(|\partial\Omega|),
&k=0,1,2,3,
\end{cases}
\end{equation}
with unknown \(u=u(s)\) and eigenvalue \(\lambda\). Here \(s\) denotes the arclength parameter on \(\partial\Omega\), and \(\kappa(s)\) is the curvature at the point \(s\in(0,|\partial\Omega|)\). Moreover, there exists an orthonormal basis \(\{u_k\}_{k\in\mathbb{N}^+}\) of eigenfunctions of \eqref{3} in \(L^2\bigl((0,|\partial\Omega|),\,2\,ds\bigr)\) such that, possibly after passing to a subsequence,
\[
u_{\varepsilon,k} \circ \Phi_{\varepsilon}\rightharpoonup u_k 
\ \text{in }H^2(\Sigma)
\ \text{as } \varepsilon\to0,
\]
where \(u_k\) depends only on the variable \(s\).
\end{theorem}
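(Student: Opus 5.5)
Our approach follows \cite{ferpro}: pull the problem back to a fixed reference domain, rescale the Rayleigh quotient by $\varepsilon$, and identify the $\Gamma$-type limit of the rescaled quadratic forms.

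\textbf{Step 1: change of variables.} Let $\gamma(s)$ be the arclength parametrization of $\partial\Omega$ with inner unit normal $n(s)$, and set $\Sigma=(0,|\partial\Omega|)\times(0,1)$ with periodicity in $s$. Define
\[
\Phi_\varepsilon(s,t)=\gamma(s)+\varepsilon t\,n(s),
\]
so that $\Phi_\varepsilon$ is a diffeomorphism onto $\omega_\varepsilon$ for $\varepsilon<\varepsilon_0$. Its Jacobian is $\varepsilon(1-\varepsilon t\kappa(s))$.

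We write $v=u\circ\Phi_\varepsilon$ and express the Hessian in $(s,t)$ coordinates. The normal derivative scales as $\varepsilon^{-1}\partial_t$, and the tangential derivative is $(1-\varepsilon t\kappa)^{-1}\partial_s$, with lower-order curvature terms. The components of the Hessian are then, up to $O(\varepsilon)$ relative errors:
\[
\varepsilon^{-2}\partial_{tt}v,\qquad \varepsilon^{-1}\partial_{st}v+\kappa\,\partial_s v\ \text{(cross term)},\qquad \partial_{ss}v-\varepsilon^{-1}\kappa\,\partial_t v.
\]

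The boundary $\partial\omega_\varepsilon$ consists of two curves, $t=0$ and $t=1$, with line elements $ds$ and $(1-\varepsilon\kappa)\,ds$. Hence $\int_{\partial\omega_\varepsilon}u^2\,d\sigma\to 2\int_0^{|\partial\Omega|}v^2\,ds$ for traces that become independent of $t$. This accounts for the weight $2\,ds$.

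\textbf{Step 2: upper bound.} By the min–max principle, $\lambda_{\varepsilon,k}$ is the $k$-th min–max value of
\[
R_\varepsilon(u)=\frac{\int_{\omega_\varepsilon}|D^2u|^2+\mu\int_{\partial\omega_\varepsilon}(\partial_\nu u)^2}{\int_{\partial\omega_\varepsilon}u^2}.
\]
As test functions we take $u=w(s)+\varepsilon\,\psi$, with $w$ ranging over the span of the first $k+1$ eigenfunctions of \eqref{3}. The corrector $\psi$ is linear or quadratic in $t$, chosen to cancel $\varepsilon^{-1}\kappa\partial_t v$ against the $O(1)$ quantities and to make $\partial_\nu u=O(\varepsilon)$ on both boundary curves, so that the $\mu$-term is $O(\varepsilon^2)$.

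The bulk integral is then $\varepsilon\int_0^1\!\int_0^{|\partial\Omega|}(\cdots)\,ds\,dt$. Optimizing the corrector pointwise should produce the energy density $(w'')^2+2\kappa^2(w')^2$:
\begin{itemize}
\item the term $(w'')^2$ comes from the $ss$-component;
\item the term $2\kappa^2(w')^2$ comes from the two cross components $\kappa w'$, since a correction of the form $\varepsilon t\,\cdot$ cannot cancel them while keeping $\partial_\nu u$ small at both $t=0$ and $t=1$.
\end{itemize}
This yields $\limsup_{\varepsilon\to0}\lambda_{\varepsilon,k}/\varepsilon\le\lambda_k$, where $\lambda_k$ is the min–max value of $\int\bigl((w'')^2+2\kappa^2(w')^2\bigr)\,ds\big/\,2\int w^2\,ds$. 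This quotient is exactly the Rayleigh quotient of \eqref{3}.

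\textbf{Step 3: lower bound and convergence.} Let $v_\varepsilon=u_{\varepsilon,k}\circ\Phi_\varepsilon$. From Step 2, $\int_\Sigma|D^2u|^2\circ\Phi_\varepsilon=O(1)$ after dividing by $\varepsilon$. This gives the following bounds:
\begin{itemize}
\item $\|\partial_{tt}v_\varepsilon\|_{L^2(\Sigma)}=O(\varepsilon^2)$;
\item $\|\partial_{st}v_\varepsilon+\varepsilon\kappa\,\partial_s v_\varepsilon\|_{L^2(\Sigma)}=O(\varepsilon)$;
\item $\|\partial_{ss}v_\varepsilon-\varepsilon^{-1}\kappa\,\partial_t v_\varepsilon\|_{L^2(\Sigma)}=O(1)$;
\item $\|\partial_t v_\varepsilon\|_{L^2(t=0,1)}=O(\varepsilon^{3/2})$, from the $\mu$-term.
\end{itemize}
Combined with the trace normalization, these give uniform $H^2(\Sigma)$ bounds. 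Extracting a subsequence, $v_\varepsilon\rightharpoonup u_k$ in $H^2(\Sigma)$ with $\partial_t u_k=0$, and the traces converge strongly, so the normalization passes to the limit in $L^2(2\,ds)$.

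For the lower bound we introduce the rescaled quantities $\varepsilon^{-1}\partial_t v_\varepsilon$ and $\varepsilon^{-2}\partial_{tt}v_\varepsilon$. These are bounded in $L^2$, so they have weak limits $a(s,t)$ and $\partial_t a$. The limit quadratic form is
\[
\int_\Sigma\Bigl[(\partial_t a)^2+2(\partial_s a+\kappa u')^2+(u''-\kappa a)^2\Bigr]\,ds\,dt,
\]
subject to the boundary condition $a=0$ at $t=0,1$ coming from the $\mu$-term. Weak lower semicontinuity bounds $\liminf_{\varepsilon\to0}\lambda_{\varepsilon,k}/\varepsilon$ from below by the minimum of this form over $a$. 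Minimizing in $a$ gives the one-dimensional energy $(u'')^2+2\kappa^2(u')^2$. Orthogonality of the limits $u_j$, $j\le k$, follows from strong trace convergence, and an induction on $k$ as in \cite{ferpro} closes the argument.

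\textbf{Main obstacle.} The delicate step is the reduction to the one-dimensional energy: proving that the infimum over the microscopic corrector $a$ is exactly $\int\bigl((u'')^2+2\kappa^2(u')^2\bigr)\,ds$. Equivalently, the corrector $\psi$ in Step 2 must be matched with the limit $a$ in Step 3. I would first try $a\equiv0$, which is forced when $\mu>0$: the factor $\varepsilon^{-1}\partial_t v$ vanishes on both edges and its $t$-derivative is penalized, so $a\to0$. This would make the limit density exactly $(u'')^2+2\kappa^2(u')^2$, matching \eqref{3}. Confirming that the scaling indeed kills $a$ is what needs care.
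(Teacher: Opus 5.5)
Your overall route differs from the paper's. You use a min--max upper bound, then a liminf inequality for eigenfunctions with induction on $k$. The paper instead shifts the spectrum by $\varepsilon b$, proves the coercivity estimate \eqref{26} for arbitrary data, and obtains $E$-compact convergence of the resolvents $B_\varepsilon\to B_0$ before applying Theorem~\ref{th 2.5}. Your route would work, but Step~3 as written has a genuine error, exactly where this problem departs from \cite{ferpro}.

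You let $a_\varepsilon:=\varepsilon^{-1}\partial_t v_\varepsilon\rightharpoonup a$ and identify the weak limit of $\varepsilon^{-2}\partial^2_{tt}v_\varepsilon$ with $\partial_t a$. But $\partial_t a_\varepsilon=\varepsilon\cdot\varepsilon^{-2}\partial^2_{tt}v_\varepsilon\to0$ in $L^2(\Sigma)$, so $\partial_t a=0$: the energy penalizes $\partial_t a_\varepsilon$ at order $\varepsilon^{-2}$, not $O(1)$. Your functional has only the penalty $(\partial_t a)^2$ and $a=0$ at $t=0,1$, and minimizing it in $a$ does not give $(u'')^2+2\kappa^2(u')^2$. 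The first variation at $a=0$ along $\eta$ vanishing at $t=0,1$ is $\int_\Sigma(-4\kappa'u'-6\kappa u'')\eta\,ds\,dt$. This is nonzero, e.g., on a disk for any nonconstant $u$, so your liminf bound would be strictly below $\lambda_k$.

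The correct argument is the one in Section~\ref{ss:3.2} for $w$. First, $a$ is independent of $t$. Second, the $\mu$-term gives $\|a_\varepsilon(\cdot,0)\|^2_{L^2}\le C\varepsilon/\mu$, so $a(\cdot,0)=0$ and hence $a\equiv0$. It is this $t$-independence, not the two edge conditions, that kills $a$. Then drop the nonnegative $\varepsilon^{-4}(\partial^2_{tt}v_\varepsilon)^2$ and $\mu$-terms and apply weak lower semicontinuity to the two remaining Hessian components; this gives the right density. Relatedly, the corrector in Step~2 is superfluous. The choice $\psi=0$ already gives $\partial_\nu u\equiv0$ and the correct limit, and any $\psi$ with $\partial^2_{tt}\psi$ not $O(\varepsilon)$ makes the $\varepsilon^{-4}$-term blow up.

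The same estimate is what your sentence ``these give uniform $H^2(\Sigma)$ bounds'' silently relies on, because $\partial^2_{ss}v_\varepsilon$ is controlled only through the combination $\partial^2_{ss}v_\varepsilon-\varepsilon^{-1}\kappa\,\partial_t v_\varepsilon$. The missing steps are:
\begin{enumerate}[(i)]
\item Show $\|a_\varepsilon\|_{L^2(\Sigma)}\le\|a_\varepsilon(\cdot,0)\|_{L^2}+\varepsilon\|\varepsilon^{-2}\partial^2_{tt}v_\varepsilon\|_{L^2(\Sigma)}=O(\varepsilon^{1/2})$.
\item Bound $\|v_\varepsilon\|_{L^2(\Sigma)}\le\|v_\varepsilon(\cdot,0)\|_{L^2}+\|\partial_t v_\varepsilon\|_{L^2(\Sigma)}$ using the trace normalization.
\item Use the periodic inequality $\|\partial_s v\|^2\le\|v\|\,\|\partial^2_{ss}v\|$ to absorb the $O(\varepsilon)\,\partial_s v_\varepsilon$ term; together with (i) and (ii) this bounds $\partial^2_{ss}v_\varepsilon$ and $\partial_s v_\varepsilon$.
\item Read off $\varepsilon^{-1}\partial^2_{st}v_\varepsilon$ from the mixed component.
\end{enumerate}
The paper gets the analogous bound \eqref{26} by a longer interpolation argument that imports the Gauss--Bonnet-based inequality \eqref{36} from \cite{ferpro}. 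For eigenfunctions with $\mu>0$ the short argument above suffices.

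With these repairs your variational proof closes. It avoids the shifted problem and the connecting operators $\mathcal{E}_\varepsilon$. The paper's resolvent approach, in exchange, yields operator convergence and the statement on generalized eigenfunctions for multiple eigenvalues.
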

The mappings \(\Phi_\varepsilon\) and the reference set \(\Sigma\) are defined in Section~\ref{sect. 2}.


\section{\large Preliminaries and notation}\label{sect. 2}
\subsection{\large The functional-geometric setting}
\noindent
Let $\Omega$ be a bounded domain in $\mathbb{R}^2$ of class $C^{0,1}$ (i.e. Lipschitz). We denote by $H^k\left(\Omega\right)$ the standard Sobolev space of all functions 
$u\in L^2\left(\Omega\right)$ for which every weak derivative 
$D^\alpha u$ of order $|\alpha|\leq k$
exists and also lies in 
$L^2\left(\Omega\right)$.

The Sobolev space $H^2\left(\Omega\right)$ is naturally endowed with the norm 
\[
\|u\|_{H^2\left(\Omega\right)}=\left(\|D^2u\|^2_{L^2\left(\Omega\right)}+\|u\|^2_{L^2\left(\Omega\right)}\right)^{\frac{1}{2}}.
\]
In \cite{PDLLP} it is pointed out that 
\[
\|u\|_{H^2\left(\Omega\right)}=\left(\|D^2u\|^2_{L^2\left(\Omega\right)}+\|u\|^2_{L^2\left(\partial\Omega\right)}\right)^{\frac{1}{2}}.
\] 
is equivalent to the standard norm of $H^2\left(\Omega\right)$.

By 
$
H^k_p\bigl((0,|\partial\Omega|)\bigr)
$
we denote the closure in 
$
H^k\bigl((0,|\partial\Omega|)\bigr)
$
of the space 
$
C^\infty_p\bigl((0,|\partial\Omega|)\bigr),
$
which consists of those functions in \(C^\infty\bigl((0,|\partial\Omega|)\bigr)\) satisfying
$
u^{(k)}(0)=u^{(k)}(|\partial\Omega|)
\ \text{for all }k\in\mathbb{N}.
$

Now we establish the geometry of a tubular neighborhood  of smooth boundaries and an adapted local coordinate system. Recall the following result from \cite{fed}.
\begin{theorem}\cite{fed}\label{thm:2.1}
Let \(k\ge2\) and let \(\Omega\subset\mathbb{R}^2\) be a bounded domain of class \(C^k\). Then there exists \(\varepsilon>0\) such that every point in \(\omega_\varepsilon\) has a unique nearest point on \(\partial\Omega\). Moreover, the function \(\text{dist}(\cdot,\partial\Omega)\) is of class \(C^k\) in \(\omega_\varepsilon\).
\end{theorem}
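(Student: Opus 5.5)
This is the classical tubular-neighbourhood result (Federer's theorem on sets of positive reach, specialised to smooth planar domains). I would argue via the normal map and the inverse function theorem.

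\emph{Setup.} Since \(\partial\Omega\) is a compact \(C^k\) curve, \(k\ge2\), parametrize each component by arclength, \(\gamma:\mathbb{R}/|\partial\Omega|\mathbb{Z}\to\mathbb{R}^2\). Let \(n(s)\) be the inner unit normal, of class \(C^{k-1}\). Define
\[
\Psi(s,t)=\gamma(s)+t\,n(s).
\]
Then \(\Psi\) is \(C^{k-1}\), and by the Frenet formula \(n'=-\kappa\gamma'\) its Jacobian determinant is \(1-t\kappa(s)\). This determinant is nonzero for \(|t|<1/\|\kappa\|_\infty\), so \(\Psi\) is a local diffeomorphism on \(\mathbb{R}/|\partial\Omega|\mathbb{Z}\times(-\delta,\delta)\) for \(\delta\) small.

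\emph{Injectivity.} Next I would show that \(\Psi\) is globally injective on this strip for some smaller \(\delta\). Suppose not. Then there are pairs \((s_j,t_j)\ne(s'_j,t'_j)\) with \(|t_j|,|t'_j|\to0\) and \(\Psi(s_j,t_j)=\Psi(s'_j,t'_j)\). By compactness, extract convergent subsequences. The limits satisfy \(\gamma(s)=\gamma(s')\), hence \(s=s'\) because \(\gamma\) is an embedding. But then for large \(j\) both points lie in a neighbourhood of \((s,0)\) on which \(\Psi\) is injective by the inverse function theorem, a contradiction. This compactness step is the main obstacle, and it is exactly where the embeddedness and compactness of \(\partial\Omega\) are used.

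\emph{Unique nearest point.} Take \(\varepsilon<\delta/2\) and \(x\) with \(\operatorname{dist}(x,\partial\Omega)<\varepsilon\). A nearest point \(\gamma(s)\) exists by compactness. The first-order condition forces \(x-\gamma(s)\) to be normal, so \(x=\Psi(s,t)\) with \(|t|=\operatorname{dist}(x,\partial\Omega)<\delta\). Any other nearest point would give a second preimage of \(x\) in the strip, which injectivity rules out. For \(x\in\omega_\varepsilon\subset\Omega\) we moreover get \(t>0\), and \(\operatorname{dist}(x,\partial\Omega)=t(x)\).

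\emph{Regularity.} The inverse \((s,t)=\Psi^{-1}\) is a priori only \(C^{k-1}\), so I would bootstrap for the distance function. Write \(d(x)=t(x)=(x-\gamma(s(x)))\cdot n(s(x))\). Differentiating, and using that \(x-\gamma(s(x))\) is normal while \(\gamma'\perp n\) and \(n'\parallel\gamma'\), gives
\[
\nabla d(x)=n(s(x)).
\]
The right-hand side is a composition of \(C^{k-1}\) maps, so \(\nabla d\in C^{k-1}\) and therefore \(d\in C^k\) on \(\omega_\varepsilon\).
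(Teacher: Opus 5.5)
Your proof is correct. It necessarily takes a different route from the paper, which gives no argument here and simply imports the statement from Federer's theory of sets of positive reach.

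What you give is the standard self-contained proof. It is essentially Gilbarg--Trudinger, Lemma 14.16, except that you obtain uniqueness of the nearest point from a compactness-by-contradiction injectivity argument for $\Psi$ rather than from a uniform interior ball condition. It needs nothing beyond the inverse function theorem, and it isolates the one non-obvious point. Inverting the $C^{k-1}$ map $\Psi$ loses a derivative, and the identity $\nabla d=n\circ s$ wins it back, so $d\in C^k$ rather than merely $C^{k-1}$.

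Federer's framework, by contrast, buys generality: it applies to any closed set of positive reach (for instance $C^{1,1}$ boundaries), with a Lipschitz nearest-point projection and $C^1$ distance. It also gives statements valid up to the maximal radius, the reach. Your contradiction argument yields only some non-explicit $\delta$, which is all the statement requires.

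Two small points:
\begin{itemize}
\item $\partial\Omega$ may have several components, so $\gamma$ should be defined on a finite disjoint union of circles; nothing else in the argument changes.
\item The claim $t>0$ for $x\in\omega_\varepsilon$ deserves a line. The half-open segment from $x$ to its nearest point misses $\partial\Omega$, so it stays in $\Omega$. In any case regularity would follow equally from $d=|t|$ with $t\neq0$.
\end{itemize}
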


Throughout the rest of the paper, denote by \(\varepsilon_0\) the maximal tubular radius of \(\Omega\), namely
\[
\varepsilon_0 := \sup \bigl\{\varepsilon>0 : \text{every point in }\omega_\varepsilon \text{ has a unique nearest point on }\partial\Omega \bigr\}.
\]
By Theorem~\ref{thm:2.1}, if \(\partial\Omega\) is \(C^k\), then \(\varepsilon_0>0\). For any \(x\in\omega_\varepsilon\) with nearest boundary point \(y\in\partial\Omega\), one has
\[
1 -\mathrm{dist}(x,\partial\Omega)\kappa(y) > 0,
\]
where \(\kappa(y)\) is the curvature of \(\partial\Omega\) at \(y\) with respect to the outward normal (see, e.g., \cite[Lemma~2.2]{lll}).

Fix a base point \(x_0 \in \partial\Omega\) and identify \(\partial\Omega\) with the interval \([0,|\partial\Omega|]\) via the arclength parameter \(s\), with \(s=0\) and \(s=|\partial\Omega|\) corresponding to the same point \(x_0\). Write \(\nu(s)\) for the outward unit normal at the boundary point parameterized by \(s\).

Define the coordinate map
\[
\Phi_\varepsilon : \Sigma := \partial\Omega \times (0,1)
\rightarrow \omega_\varepsilon,
\
\Phi_\varepsilon(s,t):= s- \varepsilon t \nu(s).
\]
Since \(\partial\Omega\) is \(C^k\) and \(\varepsilon<\varepsilon_0\), \(\Phi_\varepsilon\) is a diffeomorphism onto \(\omega_\varepsilon\) (see, e.g., \cite[Section~2.4]{bel}). Identifying \(\partial\Omega\) with \((0,|\partial\Omega|)\), we view \(\Phi_\varepsilon\) as a diffeomorphism from \((0,|\partial\Omega|)\times(0,1)\) onto \(\omega_\varepsilon\). In these coordinates, 
\[
t = \varepsilon^{-1}\mathrm{dist}\bigl(s - \varepsilon t \nu(s),\partial\Omega\bigr).
\]
The pair \((s,t)\) is often called \emph{curvilinear} or \emph{Fermi coordinates}. In \(\omega_\varepsilon\), they form a global coordinate system. In particular, for any integrable function \(f\) on \(\omega_\varepsilon\),
\begin{equation}\label{6}
\int_{\omega_\varepsilon} f(x)\,dx 
=
\int_{\Sigma} \bigl(f \circ \Phi_\varepsilon\bigr)(s,t)\varepsilon\bigl(1 - \varepsilon t \kappa(s)\bigr) dt ds.
\end{equation}

Moreover, if \(f,g\in H^2(\omega_\varepsilon)\), then \(f\circ\Phi_\varepsilon\) and \(g\circ\Phi_\varepsilon\) lie in \(H^2(\Sigma)\). A lengthy but standard calculation (see \cite[Section~2.2, formula~(2.7)]{ferpro}) shows that
\begin{equation}\label{7}
\begin{split}
(D^2 f :& D^2 g)\circ \Phi_\varepsilon
\\=& \frac{\partial^2_{ss}(f\circ\Phi_\varepsilon)\partial^2_{ss}(g\circ\Phi_\varepsilon)}{(1 - \varepsilon t \kappa(s))^4}
+ \frac{2\partial^2_{st}(f\circ\Phi_\varepsilon)\partial^2_{st}(g\circ\Phi_\varepsilon)}{\varepsilon^2(1 - \varepsilon t \kappa(s))^2}
+ \frac{\partial^2_{tt}(f\circ\Phi_\varepsilon)\partial^2_{tt}(g\circ\Phi_\varepsilon)}{\varepsilon^4}\\
&
+ \frac{\varepsilon t\kappa'(s)}{(1 - \varepsilon t \kappa(s))^5}
\bigl(\partial_s(f\circ\Phi_\varepsilon)\,\partial^2_{ss}(g\circ\Phi_\varepsilon)
+ \partial^2_{ss}(f\circ\Phi_\varepsilon)\,\partial_s(g\circ\Phi_\varepsilon)\bigr)\\
&
- \frac{\kappa(s)}{\varepsilon(1 - \varepsilon t \kappa(s))^3}
\bigl(\partial^2_{ss}(f\circ\Phi_\varepsilon)\partial_t(g\circ\Phi_\varepsilon)
+ \partial_t(f\circ\Phi_\varepsilon)\,\partial^2_{ss}(g\circ\Phi_\varepsilon)\bigr)\\
&
+ \frac{2\kappa(s)}{\varepsilon(1 - \varepsilon t \kappa(s))^3}
\bigl(\partial_s(f\circ\Phi_\varepsilon)\partial^2_{st}(g\circ\Phi_\varepsilon)
+ \partial^2_{st}(f\circ\Phi_\varepsilon)\partial_s(g\circ\Phi_\varepsilon)\bigr)\\
&
- \frac{t\kappa(s)\kappa'(s)}{(1 - \varepsilon t \kappa(s))^4}
\bigl(\partial_s(f\circ\Phi_\varepsilon)\partial_t(g\circ\Phi_\varepsilon)
+ \partial_t(f\circ\Phi_\varepsilon)\partial_s(g\circ\Phi_\varepsilon)\bigr)\\
&
+ \frac{2\kappa(s)^2(1 - \varepsilon t \kappa(s))^2 + \varepsilon^2 t^2(\kappa'(s))^2}
     {(1 - \varepsilon t \kappa(s))^6}
  \partial_s(f\circ\Phi_\varepsilon)\partial_s(g\circ\Phi_\varepsilon)\\
&
+ \frac{\kappa(s)^2}{\varepsilon^2(1 - \varepsilon t \kappa(s))^2}
  \partial_t(f\circ\Phi_\varepsilon)\partial_t(g\circ\Phi_\varepsilon).
\end{split}
\end{equation}

Similarly, for the normal derivative we have
\begin{equation}\label{8}
(\nabla f\cdot\nu)\circ\Phi_\varepsilon(s,0)
=
-\frac{1}{\varepsilon}\partial_t(f\circ\Phi_\varepsilon)(s,0),
\quad (\nabla f\cdot\nu)\circ\Phi_\varepsilon(s,1)
=\frac{1}{\varepsilon}\partial_t(f\circ\Phi_\varepsilon)(s,1).
\end{equation}

Recall that if \(f,g\in H^2(\omega_\varepsilon)\), then 
$
f\circ\Phi_\varepsilon, g\circ\Phi_\varepsilon \in H^2(\Sigma),
$
since \(\Phi_\varepsilon\) is smooth. By a standard density argument, identity \eqref{7} holds for all \(f,g\in H^2(\omega_\varepsilon)\).

Throughout the sequel we write 
$
\tilde f := f\circ\Phi_\varepsilon \in H^2(\Sigma)
$ for the pull-back of any function 
\(f\in H^2(\omega_\varepsilon)\).


\subsection{\large Modified problems}
\noindent
Let $\varepsilon\in(0,\varepsilon_0)$. Referring to the left‐hand side of the weak formulation of problem \eqref{2}, we introduce a shifted eigenvalue  $\overline{\lambda}_\varepsilon:=\lambda_\varepsilon+\varepsilon b$, where \(b>0\) is a constant to be chosen later.  We then consider the modified Steklov problem
\begin{equation}\label{9}
\begin{cases}
\Delta^2 u_\varepsilon = 0, 
&\text{in }\omega_\varepsilon,\\
\partial_{\nu \nu}^2u_\varepsilon=-\mu\partial_{\nu} u_\varepsilon, 
&\text{on }\partial\omega_\varepsilon,\\
-\text{div}_{\partial\omega_\varepsilon}\left(D^2u_\varepsilon\cdot\nu\right)_{\partial\omega_\varepsilon}-\partial_{\nu}\Delta u_\varepsilon+\varepsilon b u_\varepsilon =\overline{\lambda}_\varepsilon u_\varepsilon,
&\text{on }\partial\omega_\varepsilon.
\end{cases}
\end{equation}
Its variational formulation reads:
\begin{equation}\label{10}
\int_{\omega_\varepsilon}D^2u_\varepsilon:D^2\varphi dx+\mu\int_{\partial\omega_\varepsilon}\partial_\nu u_\varepsilon\partial_\nu\varphi d\sigma+\varepsilon b\int_{\partial\omega_\varepsilon}u_\varepsilon\varphi d\sigma=
\overline{\lambda}_\varepsilon\int_{\partial\omega_\varepsilon}u_\varepsilon\varphi d\sigma,  
\end{equation}
for all $\varphi\in H^2(\omega_\varepsilon)$.
Clearly, analyzing the asymptotic behavior of \(\lambda_\varepsilon\) as \(\varepsilon\to0\) is equivalent to studying \(\overline{\lambda}_\varepsilon\).

Next, fix a boundary datum \(f_\varepsilon\in L^2(\partial\omega_\varepsilon)\) and consider the corresponding inhomogeneous problem:
\begin{equation}\label{2.4}
\begin{cases}
\Delta^2 u_\varepsilon = 0, 
&\text{in }\omega_\varepsilon,\\
\partial_{\nu \nu}^2u_\varepsilon=-\mu\partial_{\nu} u_\varepsilon, 
&\text{on }\partial\omega_\varepsilon,\\
-\text{div}_{\partial\omega_\varepsilon}\left(D^2u_\varepsilon\cdot\nu\right)_{\partial\omega_\varepsilon}-\partial_{\nu}\Delta u_\varepsilon+\varepsilon b u_\varepsilon =f_\varepsilon,
&\text{on }\partial\omega_\varepsilon.
\end{cases}
\end{equation}
Its weak formulation is
\begin{equation}\label{12}
\int_{\omega_\varepsilon}D^2u_\varepsilon:D^2\varphi dx+\mu\int_{\partial\omega_\varepsilon}\partial_\nu u_\varepsilon\partial_\nu\varphi d\sigma+\varepsilon b\int_{\partial\omega_\varepsilon}u_\varepsilon\varphi d\sigma=
\int_{\partial\omega_\varepsilon}f_\varepsilon\varphi d\sigma,      
\end{equation}
for all $\varphi\in H^2(\omega_\varepsilon)$.

Similarly, we introduce a shifted version of the one‐dimensional limiting problem \eqref{3}.  Namely, we define
\begin{equation}\label{13}
\begin{cases}
u^{(4)}-2\bigl(\kappa^2u'\bigr)'+2b u=2\lambda u,
&\text{in }(0,|\partial\Omega|),\\
u^{(k)}(0)=u^{(k)}(|\partial\Omega|),
&k=0,1,2,3.
\end{cases}
\end{equation}
Given a datum \(f\in L^2\bigl(0,|\partial\Omega|\bigr)\), the corresponding inhomogeneous limit problem is
\begin{equation}\label{14}
\begin{cases}
u^{(4)}-2\bigl(\kappa^2u'\bigr)'+2b u=2f,
&\text{in }(0,|\partial\Omega|),\\
u^{(k)}(0)=u^{(k)}(|\partial\Omega|),
&k=0,1,2,3,
\end{cases}
\end{equation}
whose weak formulation is
\[
\int^{|\partial\Omega|}_{0}u''\phi''+2\kappa^2u'\phi'+2bu\phi ds=   2\int^{|\partial\Omega|}_{0}f\phi ds, 
\]
for all $\phi\in H_p^2((0,|\partial\Omega|))$.


\subsection{\large Convergence of operators and their spectra}\label{ss:2.4}
\noindent
In this subsection, we recall a few definitions of convergence of operators as well as related results on spectral convergence.

We note that the domains $\omega_\varepsilon$ vary with $\varepsilon$, and hence when considering the biharmonic Steklov problem in $\omega_\varepsilon$, the corresponding Hilbert spaces $\mathcal{H}_\varepsilon$ depend on $\varepsilon$, complicating the direct application of the standard notion of compact convergence. So, we shall employ suitable connecting systems that facilitate the transition from the variable Hilbert spaces to the fixed limiting Hilbert space. 

This methodology integrates various concepts and results from the works of Stummel \cite{FS} and Vainikko \cite{GMV}, which have been further elaborated in \cite{JMA, AC}. Notably, we utilize the concept of $E$-compact convergence.

In the spirit of \cite{JMA}, we denote by $\mathcal{H}_\varepsilon$ a family of Hilbert spaces for $\varepsilon\in [0, \varepsilon_0]$ and assume the existence of a family of linear operators $E_\varepsilon: \  \mathcal{H}_0 \rightarrow \mathcal{H}_\varepsilon$ such that
\begin{equation}\label{16}
\|E_\varepsilon f\|_{\mathcal{H}_\varepsilon}\xrightarrow{\varepsilon\rightarrow 0}\|f\|_{\mathcal{H}_0}, \ \text{for all} \ f\in\mathcal{H}_0.
\end{equation}

\begin{definition}\label{def 2.1}
We say that a family $\{f_\varepsilon\}_{0<\varepsilon\leq\varepsilon_0}$, with $f_\varepsilon\in \mathcal{H}_\varepsilon, E$-converges to $f\in \mathcal{H}_0$ if $\|f_\varepsilon-E_\varepsilon f\|_{\mathcal{H}_\varepsilon}\rightarrow0$
as $\varepsilon\rightarrow0$. We write this as $f_\varepsilon\xrightarrow{E}f$.
\end{definition}

\begin{definition}\label{def 2.2}
Let $\{B_\varepsilon\in\mathcal{L}\left(\mathcal{H}_\varepsilon\right): \ \varepsilon\in(0,\varepsilon_0]\}$ be a family of linear and continuous operators. We say
that $\{B_\varepsilon\}_{0<\varepsilon\leq\varepsilon_0}$ $E$-converges to $B_0\in\mathcal{L}\left(\mathcal{H}_0\right)$ as $\varepsilon\rightarrow0$ if $B_\varepsilon f_\varepsilon\xrightarrow{E}B_0f$ whenever $f_\varepsilon\xrightarrow{E}f$. We write this as
$B_\varepsilon\xrightarrow{EE}B_0$.
\end{definition}

\begin{definition}\label{def 2.3}
Let $\{f_\varepsilon\}_{0<\varepsilon\leq\varepsilon_0}$ be a family such that $f_\varepsilon\in \mathcal{H}_\varepsilon$. We say that $\{f_\varepsilon\}_{0<\varepsilon\leq\varepsilon_0}$ is precompact if
for any sequence $\varepsilon_n\rightarrow0$ there exist a subsequence $\{\varepsilon_{n_k}\}_{k\in\mathbb{N}}$ and $f\in \mathcal{H}_0$ such that $f_{\varepsilon_{n_k}}\xrightarrow{E}f$ as $k\to\infty$.
\end{definition}
\begin{definition}\label{def 2.4}
We say that $\{B_\varepsilon\}_{0<\varepsilon\leq\varepsilon_0}$ with $B_\varepsilon\in\mathcal{L}\left(\mathcal{H}_\varepsilon\right)$ and $B_\varepsilon$ compact, converges compactly to a
compact operator $B_0\in\mathcal{L}\left(\mathcal{H}_0\right)$ if $B_\varepsilon\xrightarrow{EE}B_0$ and for any family $\{f_\varepsilon\}_{0<\varepsilon\leq\varepsilon_0}$ such that $f_\varepsilon\in \mathcal{H}_\varepsilon$, $\|f_\varepsilon\|_{\mathcal{H}_\varepsilon}=1$,
we have that $\{B_\varepsilon f_\varepsilon\}_{0<\varepsilon\leq\varepsilon_0}$ is precompact in the sense of Definition \ref{def 2.3}. We write this as $B_\varepsilon\xrightarrow{C}B_0$.
\end{definition}

The $E$-compact convergence implies spectral stability. Namely, we have the following result where by `generalized eigenfunction' associated to $m$ eigenvalues  we mean a linear combination of $m$ eigenfunctions associated to those egenvalues.

\begin{theorem}\label{th 2.5}
Let $\{B_\varepsilon\}_{0\le \varepsilon\leq\varepsilon_0}$ be a family of non-negative, compact self-adjoint operators in the
Hilbert spaces $\mathcal{H}_\varepsilon$. Assume that their eigenvalues are given by $\{\lambda_k(\varepsilon)\}^{\infty}_{k=1}$. If $B_\varepsilon\xrightarrow{C}B_0$, then there is spectral convergence of $B_\varepsilon$ to $B_0$ as $\varepsilon\rightarrow0$.
In particular, the following statements hold: 
\begin{itemize}
 \item[(i)] For every $k\in  \mathbb{N}$ we have $\lambda_k(\varepsilon )\to \lambda_k(0)$ as $\varepsilon \to 0$.
 \item[(ii)] If $u_k(\varepsilon)$, $k\in \mathbb{N}$, is an orthonormal sequence of eigenfunctions associated with the eigenvalues  $\lambda_k(\varepsilon )$ then
 there exists an orthonormal sequence of eigenfunctions  $u_k(0)$, $k\in  \mathbb{N}$ associated with  $\lambda_k(0 )$, $k\in  \mathbb{N}$ such that, possibly passing to a subsequence, $u_n(\varepsilon )\xrightarrow{E} u_k(0)$.
 \item[(iii)]  Given  $m$ eigenvalues $\lambda_k(0),  \dots , \lambda_{k+m-1}(0)$ with
$\lambda_k(0)\ne \lambda_{k-1}(0)$ and $\lambda_{k+m-1}(0)$ $\ne \lambda_{k+m}(0)$
 and corresponding orthonormal eigenfunctions $u_k(0),\dots,u_{k+m-1}(0)$
 there exist $m$ orthonormal   generalized eigenfunctions   $v_k(\varepsilon ),  \dots , v_{k+m-1}(\varepsilon )$  associated with  $\lambda_k(\varepsilon)$,  $\dots ,  
  \lambda_{k+m-1}(\varepsilon )$   such that $v_{k+j}(\varepsilon )\xrightarrow{E} u_{k+j}(0)$  for all $j=0, 1,\dots , m-1$.
 \end{itemize}    
\end{theorem}
We refer to \cite[Theorem~2.5]{AFPDL}, \cite[Theorem~6.3]{GMV}, \cite[Theorem~1]{SS}, see also \cite[Theorem~4.10]{JMA}, \cite[Theorem~5.1]{EA} and
\cite[Theorem~3.3]{AC} 
for more details on spectral convergence.


\section{\large Proof of Theorem \ref{thm:1.1}}\label{sect. 3}
\subsection{\large A key coercivity estimate}
\noindent
First, observe that inequality \eqref{12} can be reformulated in the $(s,t)$-coordinates on $(0,|\partial\Omega|)\times(0,1)$ (see \eqref{6}, \eqref{8}) as
\begin{equation}\label{17}
\begin{aligned}
\int_{\Sigma}(D^2u_\varepsilon&:D^2\varphi) \circ\Phi_\varepsilon(s,t)\,
(1 - \varepsilon t\kappa(s))dtds
\\+&\frac{\mu}{\varepsilon^3}\int^{|\partial\Omega|}_{0}\partial_t(u_\varepsilon\circ\Phi_\varepsilon)(s,0)\partial_t(\varphi\circ\Phi_\varepsilon)(s,0)ds
\\+&\frac{\mu}{\varepsilon^3}\int^{|\partial\Omega|}_{0}\partial_t(u_\varepsilon\circ\Phi_\varepsilon)(s,1)\partial_t(\varphi\circ\Phi_\varepsilon)(s,1)(1 - \varepsilon\kappa(s))ds
\\+& b\int^{|\partial\Omega|}_{0}(u_\varepsilon\varphi)\circ\Phi_\varepsilon(s,0)ds
+b\int^{|\partial\Omega|}_{0}(u_\varepsilon\varphi)\circ\Phi_\varepsilon(s,1)(1 - \varepsilon\kappa(s))ds
\\&=\frac{1}{\varepsilon}
\int^{|\partial\Omega|}_{0}(f_\varepsilon\varphi)\circ\Phi_\varepsilon(s,0)ds
+\frac{1}{\varepsilon}\int^{|\partial\Omega|}_{0}(f_\varepsilon\varphi)\circ\Phi_\varepsilon(s,1)(1-\varepsilon\kappa(s))ds,
\end{aligned}
\end{equation}  
for all $\varphi\in H^2(\Sigma)$. We assume that 
$\sup_{\varepsilon\in(0,\varepsilon_0)}
\|\varepsilon^{-1}\tilde f_\varepsilon(\cdot,i)\|_{{L^2((0,|\partial\Omega|)}}\ne \infty,  (i=0,1),
$
and
$
\varepsilon^{-1}\tilde f_\varepsilon(s,0) \rightharpoonup f_1(s) \ \text{in} \ L^2\left((0,|\partial\Omega|)\right),   
$ and $
\varepsilon^{-1}\tilde f_\varepsilon(s,1) \rightharpoonup f_2(s) \ \text{in} \ L^2\left((0,|\partial\Omega|)\right)  
$ as $\varepsilon\to 0$. Let us denote the left-hand side of \eqref{17} by $a(\tilde u_\varepsilon, \varphi)$.

From \eqref{7} we obtain the following expansion for the integrand $D^2u_\varepsilon : D^2\varphi$ in the new coordinates:
\begin{align}
(D^2u_\varepsilon &: D^2\varphi)\circ\Phi_\varepsilon
= \label{18}\\&
\frac{\partial^2_{ss}(u_\varepsilon\circ\Phi_\varepsilon)\partial^2_{ss}(\varphi\circ\Phi_\varepsilon)}
        {(1 - \varepsilon t\kappa(s))^4}
+ \frac{2\partial^2_{st}(u_\varepsilon\circ\Phi_\varepsilon)\partial^2_{st}(\varphi\circ\Phi_\varepsilon)}
         {\varepsilon^2(1 -  \varepsilon t\kappa(s))^2}
+ \frac{\partial^2_{tt}(u_\varepsilon\circ\Phi_\varepsilon)\partial^2_{tt}(\varphi\circ\Phi_\varepsilon)}
         {\varepsilon^4}
   \label{19}\\
&
+ \frac{\varepsilon t \kappa'(s)}{(1 - \varepsilon t\kappa(s))^5}
  (\partial_s(u_\varepsilon\circ\Phi_\varepsilon) \partial^2_{ss}(\varphi\circ\Phi_\varepsilon)
        + \partial^2_{ss}(u_\varepsilon\circ\Phi_\varepsilon) \partial_s(\varphi\circ\Phi_\varepsilon))
   \label{20}\\
&
- \frac{\kappa(s)}{\varepsilon(1 - \varepsilon t\kappa(s))^3}
  (\partial^2_{ss}(u_\varepsilon\circ\Phi_\varepsilon)\partial_t(\varphi\circ\Phi_\varepsilon)
        + \partial_t(u_\varepsilon\circ\Phi_\varepsilon)\partial^2_{ss}(\varphi\circ\Phi_\varepsilon))
   \label{21}\\
&
+ \frac{2\kappa(s)}{\varepsilon (1 - \varepsilon t\kappa(s))^3}
  (\partial_s(u_\varepsilon\circ\Phi_\varepsilon)\partial^2_{st}(\varphi\circ\Phi_\varepsilon)
        + \partial^2_{st}(u_\varepsilon\circ\Phi_\varepsilon)\partial_s(\varphi\circ\Phi_\varepsilon))
   \label{22}\\
&
- \frac{t \kappa(s) \kappa'(s)}{(1 - \varepsilon t \kappa(s))^4}
(\partial_s(u_\varepsilon\circ\Phi_\varepsilon) \partial_t(\varphi\circ\Phi_\varepsilon)
        + \partial_t(u_\varepsilon\circ\Phi_\varepsilon) \partial_s(\varphi\circ\Phi_\varepsilon))
   \label{23}\\
&
+ \frac{2\kappa(s)^2(1 - \varepsilon t \kappa(s))^2 + \varepsilon^2 t^2(\kappa'(s))^2}{(1 - \varepsilon t \kappa(s))^6}
\partial_s(u_\varepsilon\circ\Phi_\varepsilon)\partial_s(\varphi\circ\Phi_\varepsilon)\label{24}\\
&
+ \frac{\kappa(s)^2}{\varepsilon^2 (1 - \varepsilon t\kappa(s))^2}
\partial_t(u_\varepsilon\circ\Phi_\varepsilon)\partial_t(\varphi\circ\Phi_\varepsilon).
   \label{25}
\end{align}

Let $\tilde u_\varepsilon = u_\varepsilon\circ\Phi_\varepsilon$. Now, we prove that there exists a constant $c>0$ such that, for all $\varepsilon\in(0,\varepsilon_0/2)$,
\begin{equation}\label{26}
\begin{aligned}
\frac{\|\partial^2_{tt}\tilde u_\varepsilon\|^2_{L^2(\Sigma)}}{\varepsilon^4}
+&\frac{\|\partial^2_{st}\tilde u_\varepsilon\|^2_{L^2(\Sigma)}}{\varepsilon^2}
+\frac{\|\kappa\partial_t\tilde u_\varepsilon\|^2_{L^2(\Sigma)}}{\varepsilon^2}\\
+&\|\partial^2_{ss}\tilde u_\varepsilon\|^2_{L^2(\Sigma)}
+\|\kappa\partial_s\tilde u_\varepsilon\|^2_{L^2(\Sigma)}
+\|\tilde u_\varepsilon(\cdot,0)\|^2_{{L^2((0,|\partial\Omega|))}}+\|\tilde u_\varepsilon(\cdot,1)\|^2_{{L^2((0,|\partial\Omega|))}}
\\ &\le  c\|\varepsilon^{-1}\tilde f_\varepsilon(\cdot,0)\|^2_{L^2((0,|\partial\Omega|))}
+c\|\varepsilon^{-1}\tilde f_\varepsilon(\cdot,1)\|^2_{L^2((0,|\partial\Omega|))}.
\end{aligned}
\end{equation}

Next, we set
\[
\varrho(s,t):=1 - \varepsilon t \kappa(s).
\]
Since $\varepsilon\le\varepsilon_0/2$, there exist constants $0<c_1\le c_2$ (independent of $\varepsilon$) such that
\[
c_1\le\varrho(s,t)\le c_2,\ \forall \ (s,t)\in\Sigma.
\]
We now take the test function $\varphi = u_\varepsilon$ in \eqref{17}. Inspecting the terms in \eqref{19}, one finds that
\[
\bigl\|\varrho^{-3/2}\,\partial^2_{ss}\tilde u_\varepsilon\bigr\|^2_{L^2(\Sigma)},\
2\bigl\|\varepsilon^{-1}\varrho^{-1/2}\partial^2_{st}\tilde u_\varepsilon\bigr\|^2_{L^2(\Sigma)},\
\bigl\|\varrho^{1/2}\varepsilon^{-2}\partial^2_{tt}\tilde u_\varepsilon\bigr\|^2_{L^2(\Sigma)},
\]
while from \eqref{24} and \eqref{25} one gets
\[
2\bigl\|\kappa \varrho^{-3/2}\partial_s\tilde u_\varepsilon\bigr\|^2_{L^2(\Sigma)}+\bigl\|\varepsilon t \kappa'\varrho^{-5/2}\partial_s\tilde u_\varepsilon\bigr\|^2_{L^2(\Sigma)},
\
\bigl\|\varepsilon^{-1}\varrho^{-1/2}\kappa \partial_t\tilde u_\varepsilon\bigr\|^2_{L^2(\Sigma)},
\]
respectively. The remaining mixed derivatives in \eqref{20} must be estimated using Cauchy–Schwarz and standard interpolation inequalities. For example, for any $\delta>0$ and sufficiently small $\theta>0$, one has
\begin{equation}\label{27}
\begin{aligned}
\int_\Sigma &\frac{2\varepsilon t \kappa'(s)}{\varrho^5} 
  \partial_s\tilde u_\varepsilon \partial^2_{ss}\tilde u_\varepsilon \varrho dt ds
\ge -c\int_\Sigma\bigl|\partial_s\tilde u_\varepsilon \partial^2_{ss}\tilde u_\varepsilon\bigr| dt\,ds\\
&\ge
-\frac{c\delta}{2} \|\partial^2_{ss}\tilde u_\varepsilon\|^2_{L^2(\Sigma)}
  -\frac{c}{2\delta}\|\partial_s\tilde u_\varepsilon\|^2_{L^2(\Sigma)} \\
&\ge 
-\frac{c\delta}{2} \|\partial^2_{ss}\tilde u_\varepsilon\|^2_{L^2(\Sigma)}
  -\frac{c\theta}{2\delta}\|\partial^2_{ss}\tilde u_\varepsilon\|^2_{L^2(\Sigma)}
 -\frac{c c_0}{2\delta \theta}\|\tilde u_\varepsilon\|^2_{L^2(\Sigma)},
\end{aligned}    
\end{equation}
where
\[
c=\max_{\substack{(s,t)\in\Sigma,\\0\le \varepsilon\le \varepsilon_0/2}}
\frac{\varepsilon t\lvert\kappa'(s)\rvert}{(1 - \varepsilon t\kappa(s))^4},
\]
and in passing to the last line we applied the one‐dimensional interpolation
\begin{equation}\label{28}
\|v'\|^2_{L^2((a,b))}
\le
\theta\|v''\|^2_{L^2((a,b))}
+
\frac{c_0}{\theta} \|v\|^2_{L^2((a,b))},    
\end{equation}
with some constant $c_0>0$. Moreover, we combine
\begin{equation}\label{29}
\|v'\|^2_{L^2((a,b))}
\le
\overline{\theta}_0\|v''\|^2_{L^2((a,b))}
+
\frac{\overline{c}_1}{\overline{\theta}_0}\left(|v(a)|^2+ |v(b)|^2\right), 
\end{equation}
and
\begin{equation}\label{30}
\|v\|^2_{L^2((a,b))}
\le
\tilde{\theta}_0\|v'\|^2_{L^2((a,b))}
+
\frac{\tilde{c}_1}{\tilde{\theta}_0}\left(|v(a)|^2+|v(b)|^2\right),   
\end{equation}
to deduce
\begin{equation}\label{31}
\begin{aligned}
\|\tilde u_\varepsilon\|^2_{L^2(\Sigma)}
\le&
\tilde{\theta}_0\|\partial_t\tilde u_\varepsilon\|^2_{L^2(\Sigma)}
+
\frac{\tilde{c}_1}{\tilde{\theta}_0}\int_0^{|\partial \Omega|}\left(|\tilde u_\varepsilon(s,0)|^2+|\tilde u_\varepsilon(s,1)|^2\right)ds
\\ \le&
\overline{\theta}_0\tilde{\theta}_0\|\partial^2_{tt}\tilde u_\varepsilon\|^2_{L^2(\Sigma)}
+
\frac{\tilde{c}_1}{\tilde{\theta}_0}\int_0^{|\partial \Omega|}\left(|\tilde u_\varepsilon(s,0)|^2+|\tilde u_\varepsilon(s,1)|^2\right)ds
\\&+
\frac{\overline{c}_1}{\overline{\theta}_0}\int_0^{|\partial \Omega|}\left(|\tilde u_\varepsilon(s,0)|^2+|\tilde u_\varepsilon(s,1)|^2\right)ds
\\&=
\theta^2_0\|\partial^2_{tt}\tilde u_\varepsilon\|^2_{L^2(\Sigma)}
+
\frac{c_1}{\theta_0}\int_0^{|\partial \Omega|}\left(|\tilde u_\varepsilon(s,0)|^2+|\tilde u_\varepsilon(s,1)|^2\right)ds,
\end{aligned}
\end{equation}
where we have set $c_1:=\tilde{c}_1+\overline{c}_1$ and $\theta_0=\tilde{\theta}_0=\overline{\theta}_0$. Combining \eqref{27} and \eqref{31} yields
\begin{equation}\label{32}
\begin{aligned}
\int_\Sigma &\frac{2\varepsilon t \kappa'(s)}{\varrho^5} 
  \partial_s\tilde u_\varepsilon \partial^2_{ss}\tilde u_\varepsilon \varrho dt ds
 \\
&\ge 
-\frac{c(\delta^2+\theta)}{2\delta} \|\partial^2_{ss}\tilde u_\varepsilon\|^2_{L^2(\Sigma)}
 -\frac{c c_0\theta^2_0}{2\delta \theta}\frac{\|\partial^2_{tt}\tilde u_\varepsilon\|^2_{L^2(\Sigma)}}{\varepsilon^4} 
 \\&
- \frac{c c_0c_1}{\theta_0\theta}\int_0^{|\partial \Omega|}|\tilde u_\varepsilon(s,0)|^2ds
-\frac{c c_0c_1c'}{\theta_0\theta}\int_0^{|\partial \Omega|}|\tilde u_\varepsilon(s,1)|^2(1 - \varepsilon\kappa(s))ds,
\end{aligned}
\end{equation}
where
\[
c' =\frac{1}{\displaystyle\min_{\substack{s\in[0,|\partial \Omega|],\\0\le \varepsilon\le\varepsilon_0/2}}
(1 - \varepsilon\kappa(s))}.
\]

By similar arguments applied to \eqref{22} and \eqref{23}, one also obtains (redefining constants if necessary)
\begin{equation}\label{33}
 \begin{aligned}
\int_{\Sigma}
\frac{4\kappa(s)}{\varepsilon\varrho^3}&
\partial_{s}\tilde u_{\varepsilon}\partial^2_{st}\tilde u_{\varepsilon} \varrho dtds
 \\
\ge &
-\frac{c\delta}{2} \frac{\|\partial^2_{st}\tilde u_\varepsilon\|^2_{L^2(\Sigma)}}{\varepsilon^2}-\frac{c\theta}{2\delta} \|\partial^2_{ss}\tilde u_\varepsilon\|^2_{L^2(\Sigma)}
 -\frac{c c_0\theta^2_0}{2\delta \theta}\frac{\|\partial^2_{tt}\tilde u_\varepsilon\|^2_{L^2(\Sigma)}}{\varepsilon^4} 
 \\&
- \frac{c c_0c_1}{\theta_0\theta}\int_0^{|\partial \Omega|}|\tilde u_\varepsilon(s,0)|^2ds
-\frac{c c_0c_1c'}{\theta_0\theta}\int_0^{|\partial \Omega|}|\tilde u_\varepsilon(s,1)|^2(1 - \varepsilon\kappa(s))ds,
 \end{aligned}   
\end{equation}
\begin{equation}\label{34}
 \begin{aligned}
\int_{\Sigma}
\frac{2t\kappa(s)\kappa'(s)}{\varrho^4}&
\partial_{s}\tilde u_{\varepsilon}\partial_{t}\tilde u_{\varepsilon}\varrho dtds
 \\
\ge &
-\frac{c\delta}{2} \frac{\|\kappa\partial_{t}\tilde u_\varepsilon\|^2_{L^2(\Sigma)}}{\varepsilon^2}-\frac{c\theta}{2\delta} \|\partial^2_{ss}\tilde u_\varepsilon\|^2_{L^2(\Sigma)}
 -\frac{c c_0\theta^2_0}{2\delta \theta}\frac{\|\partial^2_{tt}\tilde u_\varepsilon\|^2_{L^2(\Sigma)}}{\varepsilon^4} 
 \\&
- \frac{c c_0c_1}{\theta_0\theta}\int_0^{|\partial \Omega|}|\tilde u_\varepsilon(s,0)|^2ds
-\frac{c c_0c_1c'}{\theta_0\theta}\int_0^{|\partial \Omega|}|\tilde u_\varepsilon(s,1)|^2(1-\varepsilon\kappa(s))ds.
 \end{aligned}   
\end{equation}
Here \(\delta,\theta_0, \theta>0\) can be chosen arbitrarily small and independent of \(\varepsilon\), and \(c, c_0, c_1, c'\) are positive constants not depending on \(\varepsilon\).

The most delicate contribution arises from \eqref{21}. Integrating by parts in $s$ and again using \eqref{28}-\eqref{30} leads to
\begin{equation}\label{35}
 \begin{aligned}
\int_{\Sigma}
\frac{2\kappa(s)}{\varepsilon\varrho^3}
&\partial^2_{ss}\tilde u_{\varepsilon}\partial_{t}\tilde u_{\varepsilon}
\varrho dtds
\\ \ge&
-2\Biggl\lvert
\int_{\Sigma}
\frac{\kappa(s)}{\varrho^2}
\partial_{s}\tilde u_{\varepsilon}
\frac{\partial^2_{st}\tilde u_{\varepsilon}}{\varepsilon}
+
\partial_{s}\left(\frac{\kappa(s)}{\varrho^2}\right)
\partial_{s}\tilde u_{\varepsilon}
\frac{\partial_{t}\tilde u_{\varepsilon}}{\varepsilon}
dsdt
\Biggr\rvert
\\
\ge&
-\frac{c\delta}{2} \frac{\|\partial^2_{st}\tilde u_\varepsilon\|^2_{L^2(\Sigma)}}{\varepsilon^2}-\frac{c\theta}{2\delta} \|\partial^2_{ss}\tilde u_\varepsilon\|^2_{L^2(\Sigma)}
 -\frac{c c_0\theta^2_0}{2\delta \theta}\frac{\|\partial^2_{tt}\tilde u_\varepsilon\|^2_{L^2(\Sigma)}}{\varepsilon^4} 
 \\&
- \frac{c c_0c_1}{\theta_0\theta}\int_0^{|\partial \Omega|}|\tilde u_\varepsilon(s,0)|^2ds
-\frac{c c_0c_1c'}{\theta_0\theta}\int_0^{|\partial \Omega|}|\tilde u_\varepsilon(s,1)|^2(1 - \varepsilon\kappa(s))ds
\\&
-\frac{c\delta'}{2} \frac{\|\partial_{t}\tilde u_\varepsilon\|^2_{L^2(\Sigma)}}{\varepsilon^2}-\frac{c\theta'}{2\delta'} \|\partial^2_{ss}\tilde u_\varepsilon\|^2_{L^2(\Sigma)}
 -\frac{c c_0\theta^2_0}{2\delta' \theta'}\frac{\|\partial^2_{tt}\tilde u_\varepsilon\|^2_{L^2(\Sigma)}}{\varepsilon^4} 
 \\&
- \frac{c c_0c_1}{\theta_0\theta'}\int_0^{|\partial \Omega|}|\tilde u_\varepsilon(s,0)|^2ds
-\frac{c c_0c_1c'}{\theta_0\theta'}\int_0^{|\partial \Omega|}|\tilde u_\varepsilon(s,1)|^2(1-\varepsilon\kappa(s))ds,
\end{aligned}   
\end{equation}
where $c,c_0,c_1,c'>0$ are independent of $\varepsilon$, and $\delta,\delta',\theta,\theta',\theta_0>0$ remain at our disposal. The goal is to make the coefficients in front of all derivative‐terms small, at the expense of possibly enlarging the coefficient multiplying the boundary terms. Ultimately, one recognizes that the first‐order $t$–derivative term in \eqref{35} must be bounded from below by
\[
\frac{\|\kappa\partial_{t}\tilde u_{\varepsilon}\|_{L^{2}(\Sigma)}^{2}}{\varepsilon^{2}}.
\]
Since $\kappa$ may vanish on a subset of $\partial\Omega$ of positive measure, we cannot directly equate these norms. However, by the Gauss–Bonnet Theorem,
\[
\int_{\partial\Omega} \kappa(s)ds =2\pi;
\]
so there exists an open subset $J_a \subset \partial\Omega$ and a constant $a>0$ such that $\lvert\kappa(s)\rvert\ge a$ for all $s\in J_a$. In particular, one shows (cf.\ \cite{ferpro}) that there is a constant $c_a>0$ satisfying
\begin{equation}\label{36}
\frac{\|\partial_{t}\tilde u_{\varepsilon}\|_{L^{2}(\Sigma)}^{2}}{\varepsilon^{2}}
\le
4c_{a}^{2}
\frac{\|\partial^{2}_{tt}\tilde u_{\varepsilon}\|_{L^{2}(\Sigma)}^{2}}{\varepsilon^{2}}
+
4c_{a}^{2}
\frac{\|\partial^{2}_{st}\tilde u_{\varepsilon}\|_{L^{2}(\Sigma)}^{2}}{\varepsilon^{2}}
+
\frac{2}{a^{2}}
\frac{\|\kappa\partial_{t}\tilde u_{\varepsilon}\|_{L^{2}(\Sigma)}^{2}}{\varepsilon^{2}}    
\end{equation}
where the proof can be found in \cite{ferpro}. Then, we inserting \eqref{36} into \eqref{35}.

Lastly, one estimates the right‐hand side of \eqref{17} with $\varphi = u_\varepsilon$. By the Young inequality,
\begin{equation}\label{37}
\begin{aligned}
\frac{1}{\varepsilon}
\int^{|\partial\Omega|}_{0}&\tilde f_\varepsilon(s,0) \tilde u_\varepsilon (s,0)ds
+\frac{1}{\varepsilon}\int^{|\partial\Omega|}_{0}\tilde f_\varepsilon(s,1) \tilde u_\varepsilon(s,1)(1-\varepsilon\kappa(s))ds
\\ \le&
\frac{1}{\varepsilon^2}
\int^{|\partial\Omega|}_{0}\tilde f^2_\varepsilon(s,0)ds
+\frac{1}{2}
\int^{|\partial\Omega|}_{0}\tilde u^2_\varepsilon(s,0)ds
\\&
+\frac{c'}{\varepsilon^2}
\int^{|\partial\Omega|}_{0}\tilde f^2_\varepsilon(s,1)ds
+\frac{1}{2}
\int^{|\partial\Omega|}_{0}\tilde u^2_\varepsilon(s,1)(1-\varepsilon\kappa(s))ds.
\end{aligned}    
\end{equation}

Collecting all estimates \eqref{32}–\eqref{35} and choosing $\delta,\delta',\theta,\theta',\theta_0>0$ sufficiently small (and possibly increasing $b$ if needed), one finds constants $c_0>0$ and $c_1>1$, independent of $\varepsilon$, such that
\begin{equation}\label{38}
\begin{aligned}
a(\tilde u_{\varepsilon}, &\tilde u_{\varepsilon})\ge
c_{0}\Biggl(
\frac{\bigl\|\partial_{tt}^{2}\tilde u_{\varepsilon}\bigr\|^{2}_{L^{2}(\Sigma)}}{\varepsilon^{4}}
+
\frac{\bigl\|\partial_{st}^{2}\tilde u_{\varepsilon}\bigr\|^{2}_{L^{2}(\Sigma)}}{\varepsilon^{2}}
+
\frac{\bigl\|\kappa \partial_{t}\tilde u_{\varepsilon}\bigr\|^{2}_{L^{2}(\Sigma)}}{\varepsilon^{2}}
\Biggr)
\\&+
c_{0}\Bigl(
\bigl\|\partial_{ss}^{2}\tilde u_{\varepsilon}\bigr\|^{2}_{L^{2}(\Sigma)}
 + 
\bigl\|\kappa\partial_{s}\tilde u_{\varepsilon}\bigr\|^{2}_{L^{2}(\Sigma)}
\Bigr)
 \\&+ 
c_{1} \int^{|\partial\Omega|}_{0}\tilde u^2_\varepsilon(s,0)ds+c_1\int^{|\partial\Omega|}_{0}\tilde u^2_\varepsilon(s,1)(1-\varepsilon\kappa(s))ds,
\end{aligned}
\end{equation}
for all \(\varepsilon\in[0,\varepsilon_0/2]\). Then by \eqref{37}, we finally deduce that \eqref{26} holds. This completes the proof of the coercivity estimate.


\subsection{\large Derivation of the limiting problem}\label{ss:3.2}
\noindent
The coercivity estimate \eqref{26} shows that the families
\[
\Bigl\{\frac{\partial_{tt}^{2}\tilde u_{\varepsilon}}{\varepsilon^{2}}\Bigr\}_{\varepsilon\in(0,\varepsilon_0)},\
\Bigl\{\frac{\partial_{st}^{2}\tilde u_{\varepsilon}}{\varepsilon}\Bigr\}_{\varepsilon\in(0,\varepsilon_0)},\
\{\partial_{ss}^{2}\tilde u_{\varepsilon}\}_{\varepsilon\in(0,\varepsilon_0)}  
\]
are uniformly bounded in \(L^{2}(\Sigma)\) for all \(\varepsilon\in(0,\varepsilon_0)\).  In particular, \(\{\tilde u_{\varepsilon}\}_{\varepsilon\in(0,\varepsilon_0)}\) is a bounded sequence in \(H^{2}(\Sigma)\).  Hence, there exists \(u\in H^{2}(\Sigma)\) and a subsequence (still indexed by \(\varepsilon\)) such that
\[
\tilde u_{\varepsilon}\rightharpoonup u
\ \text{in }H^{2}(\Sigma),
\ \text{and hence strongly in }H^{1}(\Sigma).
\]
Moreover, there exists a function \(v\in L^{2}(\Sigma)\) such that, up to a subsequence,
\begin{equation}\label{40}
\frac{\partial_{tt}^{2}\tilde u_{\varepsilon}}{\varepsilon^{2}}
\ \rightharpoonup  v
\ \text{in }L^{2}(\Sigma)
\ \text{as }\varepsilon\to 0.
\end{equation}

Next, from \eqref{26} (or more precisely from \eqref{36} and \eqref{38}) it follows that the sequence
\[
\Bigl\{\frac{\partial_{t}\tilde u_{\varepsilon}}{\varepsilon}\Bigr\}_{\varepsilon\in(0,\varepsilon_0)}
\]
is uniformly bounded in \(L^{2}(\Sigma)\).  Therefore, up to a subsequence, there exists a function \(w\in H^{1}(\Sigma)\) such that
\[
\frac{\partial_{t}\tilde u_{\varepsilon}}{\varepsilon}
\rightharpoonup w
\quad\text{in }H^{1}(\Sigma)
\quad\text{as }\varepsilon\to 0.
\]
By the compact embedding \(H^{1}(\Sigma)\hookrightarrow L^{2}(\Sigma)\), one also has
\[
\frac{\partial_{t}\tilde u_{\varepsilon}}{\varepsilon}
\rightarrow w
\ \text{in }L^{2}(\Sigma)
\ \text{as }\varepsilon\to 0.
\]
In particular, \(\partial_{t}\tilde u_{\varepsilon}\to 0\) in \(L^{2}(\Sigma)\).  Hence \(\partial_{t}u = 0\) a.e. in \(\Sigma\), so \(u\) is constant in the \(t\)-variable.

We further deduce that the limiting function \(w\) is constant in \(t\), due to the fact that
\[
\frac{\partial^2_{tt}\tilde u_{\varepsilon}}{\varepsilon} \rightarrow 0 
\ \text{in }L^2(\Sigma).
\]

On the other hand, from \eqref{17} we have
\[
\frac{1}{\varepsilon^3}\int^{|\partial\Omega|}_{0}(\partial_t\tilde u_\varepsilon(s,0))^2ds\leq c.
\]
Then the following inequality
\[
\int^{|\partial\Omega|}_{0}\left(\frac{\partial_t\tilde u_\varepsilon(s,0)}{\varepsilon}\right)^2ds\leq \varepsilon,
\]
yields that 
\[
\biggl\|\frac{\partial_{t}\tilde u_{\varepsilon}(s,0)}{\varepsilon}\biggr\|_{L^2((0,|\partial\Omega|))} \rightarrow 0 \ \text{as }\varepsilon\to 0.
\]
So we have $w(s,0)=0$ a.e. on $(0,|\partial\Omega|)$. Being $t$ independent of $w$ forces $w\equiv0$ on $\Sigma$.

We now pass to the limit in the weak formulation \eqref{17}.  First, take 
\[
\varphi = \varepsilon^{2}\,\zeta
\quad\text{for some }\zeta\in H^{2}(\Sigma).
\]
All terms in \eqref{17} vanish as \(\varepsilon\to 0\), except possibly
\[
\frac{1}{\varepsilon^{4}}
\partial_{tt}^{2}\bigl(u_{\varepsilon}\circ \Phi_{\varepsilon}\bigr)
\partial_{tt}^{2}\bigl(\varepsilon^{2}\zeta\bigr).
\]
From \eqref{40} and \eqref{17}, we then deduce that
\[
\int_{\Sigma}
\bigl(D^{2}u_{\varepsilon} : D^{2}(\varepsilon^{2}\zeta)\bigr)\circ \Phi_{\varepsilon}(s,t)
(1-\varepsilon t\kappa(s))dtds
\rightarrow
\int_{\Sigma} v\partial_{tt}^{2}\zeta ds dt
=0
\ \text{as }\varepsilon\to 0.
\]
Since \(\zeta\) is an arbitrary function in \(H^{2}(\Sigma)\), we conclude that \(v=0\).

Next, let
\[
\varphi(s,t)=\psi(s),
\ s\in\bigl(0,|\partial\Omega|\bigr),\
\psi\in H^{2}_{p}\bigl(0,|\partial\Omega|\bigr).
\]
Using \(\varphi\) as a test function in \eqref{17}, we deduce that
\[
\begin{split}
\int_{\Sigma}\Biggl(&
\frac{\partial^2_{ss}\tilde u_{\varepsilon}\partial^2_{ss}\varphi}{(1-\varepsilon t\kappa(s))^4}
+
\frac{\varepsilon t\kappa'(s)}{(1 - \varepsilon t\kappa(s))^5}\bigl(\partial_{s}\tilde u_{\varepsilon}\partial^2_{ss}\varphi
+\partial^2_{ss}\tilde u_{\varepsilon}\partial_{s}\varphi\bigr)\\
&
-\frac{\kappa(s)}{\varepsilon(1-\varepsilon t\kappa(s))^3}\partial_{t}\tilde u_{\varepsilon}\partial^2_{ss}\varphi
+
\frac{2\kappa(s)}{\varepsilon(1-\varepsilon t\kappa(s))^3}\partial^2_{st}\tilde u_{\varepsilon}\partial_{s}\varphi
\\&
-\frac{t\kappa(s)\kappa'(s)}{(1-\varepsilon t\kappa(s))^4}\partial_{t}\tilde u_{\varepsilon}\partial_{s}\varphi
+
\frac{2\kappa(s)^2(1-\varepsilon t\kappa(s))^2 + \varepsilon^2t^2\kappa'(s)^2}{(1-\varepsilon t\kappa(s))^6}
\partial_{s}\tilde u_{\varepsilon}\partial_{s}\varphi
\Biggr)
(1 - \varepsilon t\kappa(s))dsdt
\\&+
b\int^{|\partial\Omega|}_{0}\tilde u_\varepsilon(s,0)\varphi(s,0)ds
+b\int^{|\partial\Omega|}_{0}\tilde u_\varepsilon(s,1)\varphi(s,1)(1 - \varepsilon\kappa(s))ds
\\&=\frac{1}{\varepsilon}
\int^{|\partial\Omega|}_{0}\tilde f_\varepsilon(s,0)\varphi(s,0)ds
+\frac{1}{\varepsilon}\int^{|\partial\Omega|}_{0}\tilde f_\varepsilon(s,1)\varphi(s,1)(1-\varepsilon\kappa(s))ds.
\end{split}
\]
Passing to the limit \(\varepsilon\to 0\), using the weak convergences
\begin{equation}\label{42}
\int_{\Sigma}\left(
\partial^2_{ss}u\partial^2_{ss}\psi
+2\kappa(s)^{2}\partial_{s}u\partial_{s}\psi
\right)dsdt+2b\int^{|\partial\Omega|}_{0}u\psi ds
=
\int^{|\partial\Omega|}_{0}(f_1+f_2)\psi ds,
\end{equation}
and noting that \(u,\psi\) do not depend on \(t\), we obtain
\begin{equation}\label{43}
\begin{cases}
u^{(4)}-2\bigl(\kappa^2u'\bigr)'+2bu=f_1+f_2,
&\text{in }(0,|\partial\Omega|),\\
u^{(k)}(0)=u^{(k)}(|\partial\Omega|),
&k=0,1,2,3.
\end{cases}
\end{equation}
In particular, if \(f_{1}=f_{2}=f\), then \eqref{43} reduces exactly to problem \eqref{14} stated earlier.


\subsection{\large Proof of the compact convergence}
\noindent
First, we introduce the Hilbert spaces required in Section \ref{ss:2.4}. For every \(\varepsilon\in(0,\varepsilon_0)\), we define
\[
\mathcal{H}_\varepsilon=L^{2}\left(\partial\omega_{\varepsilon};\varepsilon^{-2}d\sigma\right), 
\
\mathcal{H}_0 = L^{2}\left((0,|\partial\Omega|),2ds\right).
\]
Let \(\mathcal{E}_{\varepsilon}\colon \mathcal{H}_{0}\to \mathcal{H}_{\varepsilon}\) be the extension operator defined by
\[
\left(\mathcal{E}_{\varepsilon}u \circ \Phi_{\varepsilon}\right)(s,0)=\left(\mathcal{E}_{\varepsilon}u \circ \Phi_{\varepsilon}\right)(s,1)=\varepsilon u(s)
\
\text{for all\ }s\in\partial\Omega.
\]
Note that
\[
\lim_{\varepsilon\to0}\bigl\|\mathcal{E}_{\varepsilon}u\bigr\|_{\mathcal{H}_{\varepsilon}} = \|u\|_{\mathcal{H}_{0}}.
\]
Hence the family \(\{\mathcal{E}_{\varepsilon}\}_{\varepsilon\in(0,\varepsilon_0)}\) satisfies \eqref{16}, and is an admissible connecting system.

Now we define the operators \(B_\varepsilon\) and \(B_0\). The operator \(B_\varepsilon\) is the resolvent associated with problem \eqref{12}, rescaled by \(\varepsilon\). Namely, define 
\[
B_\varepsilon: \ \mathcal{H}_\varepsilon\rightarrow  \mathcal{H}_\varepsilon,
\]
by
\begin{equation}\label{44}
B_\varepsilon f_{\varepsilon}=\varepsilon u_{\varepsilon}, \ \text{for all} \ f_{\varepsilon}\in \mathcal{H}_\varepsilon,    
\end{equation}
where \(u_{\varepsilon}\) solves \eqref{12} with boundary data \(f_{\varepsilon}\).  A nonzero real \(\lambda(\varepsilon)\) is an eigenvalue of \(B_\varepsilon\) if and only if 
$\lambda_\varepsilon=\frac{\varepsilon(1-b\lambda(\varepsilon))}{\lambda(\varepsilon)}
$ is an eigenvalue of problem \eqref{2} with the same eigenfunction.  Equivalently, \(\lambda(\varepsilon)\neq0\) is an eigenvalue of \(B_\varepsilon\) if and only if $\overline{\lambda}_\varepsilon = \frac{\varepsilon}{\lambda(\varepsilon)}$ is an eigenvalue of problem \eqref{10}.  Since the trace operator \(H^2(\omega_\varepsilon)\to L^2(\partial\omega_\varepsilon)\) is compact, it follows that \(B_\varepsilon\) is compact.

Next, \(B_0\) is the resolvent associated with problem \eqref{14}.  Define
\[
B_0: \ \mathcal{H}_0\rightarrow  \mathcal{H}_0,
\]
by
\begin{equation}\label{45}
B_{0}f=u, \ \ \text{for all} \ f\in \mathcal{H}_0,
\end{equation}
where \(u\) solves \eqref{14} with right‐hand side \(f\).  A nonzero real \(\lambda(0)\) is an eigenvalue of \(B_0\) if and only if 
 $\lambda=\frac{1-b\lambda(0)}{\lambda(0)}
$ is an eigenvalue of problem \eqref{3} with the same eigenfunction.  Moreover,
\[
B_{0}\left(\mathcal{H}_0\right)\subset H^{4}_{p}\bigl((0,|\partial\Omega|)\bigr)
\]
and the embedding
\[
H^{4}_{p}\bigl((0,|\partial\Omega|)\bigr)
\hookrightarrow
\mathcal{H}_0
\]
is compact.  Hence \(B_{0}\) is compact.

\begin{theorem}\label{thm:3.1}
Let \(B_{\varepsilon}\), \(\varepsilon\in[0,\varepsilon_0)\), be defined by \eqref{44}, and let \(B_{0}\) be defined by \eqref{45}.  Then \(B_{\varepsilon}\) converges compactly to \(B_{0}\) as \(\varepsilon\to 0\).        
\end{theorem}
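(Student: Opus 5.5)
The plan is to verify the two requirements of Definition~\ref{def 2.4}: first $B_\varepsilon\xrightarrow{EE}B_0$, then precompactness of $\{B_\varepsilon f_\varepsilon\}$ for every family with $\|f_\varepsilon\|_{\mathcal H_\varepsilon}=1$. Both rest on the coercivity estimate \eqref{26} and on the limit passage of Subsection~\ref{ss:3.2}.

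We begin by translating the norms. For $g\in\mathcal H_\varepsilon$, formula \eqref{6} restricted to the two boundary components gives
\[
\|g\|^2_{\mathcal H_\varepsilon}=\varepsilon^{-2}\int_0^{|\partial\Omega|}\bigl(\tilde g(s,0)^2+\tilde g(s,1)^2(1-\varepsilon\kappa(s))\bigr)ds .
\]
Hence $\|f_\varepsilon\|_{\mathcal H_\varepsilon}\le 1$ means exactly that $\varepsilon^{-1}\tilde f_\varepsilon(\cdot,i)$, $i=0,1$, is bounded in $L^2((0,|\partial\Omega|))$, which is the hypothesis under which \eqref{26} was proved. Likewise, $B_\varepsilon f_\varepsilon\xrightarrow{E}B_0f$ means
\[
\|\tilde u_\varepsilon(\cdot,i)-u\|_{L^2((0,|\partial\Omega|))}\to0,\qquad i=0,1,
\]
where $\tilde u_\varepsilon$ solves \eqref{17} and $u=B_0f$. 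Indeed, the factor $\varepsilon$ in $B_\varepsilon f_\varepsilon=\varepsilon u_\varepsilon$ cancels both the $\varepsilon^{-2}$ weight and the $\varepsilon$ in $\mathcal E_\varepsilon$.

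\textbf{Precompactness.} Take $\|f_\varepsilon\|=1$ and a sequence $\varepsilon_n\to0$.
\begin{enumerate}[(i)]
\item Estimate \eqref{26} bounds $\tilde u_\varepsilon$ in $H^2(\Sigma)$.
\item Extract a subsequence with $\tilde u_\varepsilon\rightharpoonup u$ in $H^2(\Sigma)$. By compactness of the trace $H^2(\Sigma)\to L^2(\partial\Sigma)$ (even $H^1(\Sigma)$ suffices), $\tilde u_\varepsilon(\cdot,i)\to u(\cdot,i)$ strongly in $L^2$.
\item Subsection~\ref{ss:3.2} shows that $\partial_t u=0$, so $u(\cdot,0)=u(\cdot,1)=u(s)$.
\item Periodicity of $u$ in $s$ is inherited because the $\tilde u_\varepsilon$ are pulled back from the closed curve, and $H^2$-functions on $\partial\Omega\times(0,1)$ are periodic in $s$.
\end{enumerate}
Then $\|B_\varepsilon f_\varepsilon-\mathcal E_\varepsilon u\|_{\mathcal H_\varepsilon}\to0$, which gives precompactness in the sense of Definition~\ref{def 2.3}.

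\textbf{EE-convergence.} Suppose $f_\varepsilon\xrightarrow{E}f$, i.e. $\varepsilon^{-1}\tilde f_\varepsilon(\cdot,i)\to f$ strongly in $L^2$ for $i=0,1$; in particular weakly, with $f_1=f_2=f$. By the previous step every sequence has a subsequence along which $\tilde u_\varepsilon\rightharpoonup u$ and the traces converge strongly. By Subsection~\ref{ss:3.2}, and specifically \eqref{42}--\eqref{43} with $f_1=f_2=f$, the limit $u$ solves the weak form of \eqref{14}, so $u=B_0f$. Since \eqref{14} has a unique solution (its form is coercive for $b>0$), the limit does not depend on the subsequence, and the whole family converges, giving $B_\varepsilon f_\varepsilon\xrightarrow{E}B_0f$.

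\textbf{Main obstacle.} The delicate step is justifying the limit passage in \eqref{42} with the test function $\varphi=\psi(s)$. The terms carrying $\varepsilon^{-1}$ have to be handled:
\[
-\frac{\kappa}{\varepsilon}\,\partial_t\tilde u_\varepsilon\,\partial^2_{ss}\psi
\qquad\text{and}\qquad
\frac{2\kappa}{\varepsilon}\,\partial^2_{st}\tilde u_\varepsilon\,\partial_s\psi .
\]
They vanish in the limit because $\varepsilon^{-1}\partial_t\tilde u_\varepsilon\to w=0$ strongly in $L^2$, and because $\varepsilon^{-1}\partial^2_{st}\tilde u_\varepsilon=\partial_s(\varepsilon^{-1}\partial_t\tilde u_\varepsilon)\rightharpoonup\partial_s w=0$. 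This uses the fact, established above, that $w\equiv0$, which follows from the $\mu\varepsilon^{-3}$ boundary term together with $t$-independence of $w$.

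The remaining bulk terms converge by the weak convergence of $\tilde u_\varepsilon$ in $H^2(\Sigma)$, since $\varrho\to1$ uniformly. Integrating over $t\in(0,1)$ then gives the bending and curvature terms of \eqref{14}. The two boundary terms $b\int\tilde u_\varepsilon(s,i)\psi\,ds$, $i=0,1$, sum to $2b\int u\psi\,ds$, and the right-hand side tends to $2\int f\psi\,ds$. This matches the weight $2\,ds$ in $\mathcal H_0$.
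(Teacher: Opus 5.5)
Your proposal is correct and follows the paper's proof. You translate the $\mathcal H_\varepsilon$-norms via Fermi coordinates, use the coercivity estimate \eqref{26} and the limit passage of Subsection~\ref{ss:3.2} to get weak $H^2(\Sigma)$ convergence with strongly convergent, $t$-independent traces, and then identify the limit as $B_0 f$ through \eqref{14}. Your appeal to uniqueness for \eqref{14} to upgrade subsequential convergence to convergence of the whole family, as Definition~\ref{def 2.2} requires, is a step the paper leaves implicit, and it is a welcome tightening.
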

\begin{proof}
Since we only care about the limit \(\varepsilon\to 0\), we may restrict to 
$
\varepsilon \in \bigl[0,\min\{1,\varepsilon_0/2\}\bigr]
$.

Let $f_{\varepsilon}\in \mathcal{H}_\varepsilon$ satisfy
\[
\|f_{\varepsilon}\|_{\mathcal{H}_\varepsilon}=1.    
\]
Then
\[
\begin{aligned}
\frac{1}{\varepsilon^2}&\int_{\partial\omega_{\varepsilon}}f_{\varepsilon}^2 d\sigma
=
\frac{1}{\varepsilon^2}\int_0^{|\partial\Omega|}\tilde f_{\varepsilon}^2(s,0) ds
+\frac{1}{\varepsilon^2}\int_0^{|\partial\Omega|}\tilde f_{\varepsilon}^2(s,1)(1-\varepsilon\kappa(s)) ds=1.
\end{aligned}
\]
Hence, 
$
\varepsilon^{-1}\tilde f_\varepsilon(s,0) \rightharpoonup f_1(s) \ \text{in} \ L^2\left((0,|\partial\Omega|)\right),   
$ and $
\varepsilon^{-1}\tilde f_\varepsilon(s,1) \rightharpoonup f_2(s) \ \text{in} \ L^2\left((0,|\partial\Omega|)\right)  
$ along some subsequence as \(\varepsilon\to 0\).  By Section \ref{ss:3.2}, up to a further subsequence,
$
\tilde u_{\varepsilon}\rightharpoonup u
\ \text{in }H^{2}(\Sigma),
$
where \(u\) solves the limit problem \eqref{43} corresponding to boundary data \(f_{1}+f_{2}\).  

Next, note that
\[
\|B_\varepsilon f_{\varepsilon}\|^2_{\mathcal{H}_\varepsilon}=\|\varepsilon u_{\varepsilon}\|^2_{\mathcal{H}_\varepsilon}
=\int_{\partial\omega_{\varepsilon}}u_{\varepsilon}^2 d\sigma
=\int_0^{|\partial\Omega|}\tilde u_{\varepsilon}^2(s,0) ds
+\int_0^{|\partial\Omega|}\tilde u_{\varepsilon}^2(s,1)(1-\varepsilon\kappa(s)) ds, 
\]
and
\[
\begin{aligned}
<\varepsilon u_{\varepsilon},\mathcal{E}_\varepsilon u>_{\mathcal{H}_\varepsilon}
&=\frac{1}{\varepsilon}\int_{\partial\omega_{\varepsilon}}u_{\varepsilon}\mathcal{E}_\varepsilon u d\sigma
\\&=\int_0^{|\partial\Omega|}\tilde u_{\varepsilon}(s,0)u(s) ds
+\int_0^{|\partial\Omega|}\tilde u_{\varepsilon}(s,1)u(s)(1-\varepsilon\kappa(s)) ds.  
\end{aligned}
\]
Since the trace map is compact and \(\tilde u_{\varepsilon}\rightharpoonup u\) in \(H^2(\Sigma)\), we get
\begin{equation}\label{47}
\|\varepsilon u_{\varepsilon}-\mathcal{E}_\varepsilon u\|^2_{\mathcal{H}_\varepsilon}=
\|\varepsilon u_{\varepsilon}\|^2_{\mathcal{H}_\varepsilon}
-2<\varepsilon u_{\varepsilon},\mathcal{E}_\varepsilon u>_{\mathcal{H}_\varepsilon}
+\|\mathcal{E}_\varepsilon u\|^2_{\mathcal{H}_\varepsilon}\rightarrow 0,
\end{equation}
as $\varepsilon \rightarrow 0$.

Let \(f_{\varepsilon}\in \mathcal{H}_\varepsilon\) and \(f\in \mathcal{H}_0\) satisfy
\begin{equation}\label{48}
\|f_{\varepsilon}-\mathcal{E}_\varepsilon f\|_{\mathcal{H}_\varepsilon}\rightarrow 0 \ \text{as} \ \varepsilon\rightarrow 0.
\end{equation}
Set $f=\mathcal{E}_1 f_1=\mathcal{E}_1 f_2=\varepsilon^{-1}(\mathcal{E}_\varepsilon f_1)(s,0)=\varepsilon^{-1}(\mathcal{E}_\varepsilon f_2)(s,1)$. Then
\[
\begin{aligned}
&\int_0^{|\partial\Omega|}(\varepsilon^{-1}\tilde f_{\varepsilon}(s,0)-f_1(s))^2 ds
+\int_0^{|\partial\Omega|}(\varepsilon^{-1}\tilde f_{\varepsilon}(s,1)-f_2(s))^2(1-\varepsilon \kappa(s)) ds
\\& =\int_0^{|\partial\Omega|}(\varepsilon^{-1}\tilde f_{\varepsilon}-\varepsilon^{-1}(\mathcal{E}_\varepsilon f)(s,0))^2 ds
+\int_0^{|\partial\Omega|}(\varepsilon^{-1}\tilde f_{\varepsilon}(s,1)-\varepsilon^{-1}(\mathcal{E}_\varepsilon f)(s,1))^2(1-\varepsilon \kappa(s)) ds
\\&
=\frac{1}{\varepsilon^2}\int_{\partial\omega_\varepsilon}(f_{\varepsilon}-\mathcal{E}_\varepsilon f)^2 d\sigma\rightarrow 0,
\end{aligned}  
\]
$\text{as} \ \varepsilon\rightarrow 0$.
Hence, $
\varepsilon^{-1}\tilde f_\varepsilon(s,0) \rightharpoonup f_1 \ \text{in} \ L^2\left((0,|\partial\Omega|)\right),   
$ and $
\varepsilon^{-1}\tilde f_\varepsilon(s,1) \rightharpoonup f_2 \ \text{in} \ L^2\left((0,|\partial\Omega|)\right)  
$ as $\varepsilon\to 0$, and by Section \ref{ss:3.2}, up to a subsequence,
$
\tilde u_{\varepsilon}\rightharpoonup u
\ \text{in }H^{2}(\Sigma)
$. The same argument as in \eqref{47} then yields
\[
\|\varepsilon u_{\varepsilon}-\mathcal{E}_\varepsilon u\|_{\mathcal{H}_\varepsilon}\rightarrow 0,
\]
as $\varepsilon \rightarrow 0$. Since \(u = B_{0}f\), this shows
\[
B_{\varepsilon}f_{\varepsilon}
=
\varepsilon u_{\varepsilon}
\rightarrow \mathcal{E}_\varepsilon\bigl(B_{0}f\bigr)
\quad\text{in }\mathcal{H}_\varepsilon,
\]
completing the proof of Theorem~\ref{thm:3.1}.
\end{proof}


\subsection{\large Spectral convergence: asymptotics of eigenvalues}
\noindent
Combining Theorem~\ref{thm:3.1} (compact convergence of \(B_\varepsilon\) to \(B_0\)) with Theorem~\ref{th 2.5} (spectral convergence for compact operators) shows that the spectra of \(B_\varepsilon\) converge to the spectrum of \(B_0\) as \(\varepsilon\to0\).  In particular, if \(\lambda_{k}(\varepsilon)\) denotes the \(k\)-th eigenvalue of \(B_\varepsilon\) (counted in increasing order, counting multiplicities) and \(\lambda_{k}(0)\) denotes the \(k\)-th eigenvalue of \(B_0\), then
\begin{equation}\label{49}
\frac{\lambda_{\varepsilon,k}}{\varepsilon}=\frac{1-b\lambda_k(\varepsilon)}{\lambda_k(\varepsilon)}\sim\frac{1-b\lambda_k(0)}{\lambda_k(0)}=\lambda_k
\end{equation}
as $\varepsilon\rightarrow0$.

Next, let \(u_{\varepsilon,k}\) be an eigenfunction of \eqref{2} corresponding to \(\lambda_{\varepsilon,k}\), normalized so that
$
\|u_{\varepsilon,k}\|_{L^{2}(\partial\omega_{\varepsilon})}
=1.
$
Then \(\overline u_{\varepsilon,k}:=\varepsilon u_{\varepsilon,k}\) is normalized in \(\mathcal{H}_\varepsilon\), i.e.\ 
$
\|\overline u_{\varepsilon,k}\|_{\mathcal{H}_\varepsilon}
=1.
$
By Theorem~\ref{th 2.5} (spectral convergence for \(B_\varepsilon\)), there exists an orthonormal basis \(\{u_k\}_{k\in\mathbb{N}^+}\subset \mathcal{H}_0\) of eigenfunctions of \(B_0\) such that, after passing to a subsequence if necessary,
\begin{equation}\label{vainnikko1}
\|\overline u_{\varepsilon ,k}-\mathcal{E}_\varepsilon u_k\|_{\mathcal{H}_\varepsilon}\rightarrow 0 \ \text{as} \ \varepsilon\rightarrow 0.
\end{equation} 
In order to prove the convergence in $\Sigma$, we argue as before. So, we obtain the problem
\begin{equation}\label{51}
\begin{aligned}
\int_{\Sigma}(D^2u_{\varepsilon,k}&:D^2\varphi) \circ\Phi_\varepsilon(s,t)\,
(1 - \varepsilon t\kappa(s))dtds
\\+&\frac{\mu}{\varepsilon^3}\int^{|\partial\Omega|}_{0}\partial_t(u_{\varepsilon,k}\circ\Phi_\varepsilon)(s,0)\partial_t(\varphi\circ\Phi_\varepsilon)(s,0)ds
\\+&\frac{\mu}{\varepsilon^3}\int^{|\partial\Omega|}_{0}\partial_t(u_{\varepsilon,k}\circ\Phi_\varepsilon)(s,1)\partial_t(\varphi\circ\Phi_\varepsilon)(s,1)(1 - \varepsilon\kappa(s))ds
\\&=\frac{\lambda_{\varepsilon,k}}{\varepsilon}
\int^{|\partial\Omega|}_{0}(u_{\varepsilon,k}\varphi)\circ\Phi_\varepsilon(s,0)ds
+\frac{\lambda_{\varepsilon,k}}{\varepsilon}\int^{|\partial\Omega|}_{0}(u_{\varepsilon,k}\varphi)\circ\Phi_\varepsilon(s,1)(1-\varepsilon\kappa(s))ds.
\end{aligned}
\end{equation} 
If we set in \eqref{51} $\varphi=u_{\varepsilon,k}$, then taking into account \eqref{49} and the normalization $\|u_{\varepsilon,k}\|_{L^2(\partial\omega_{\varepsilon})}=1$, we obtain that
 \(\|\tilde u_{\varepsilon,k}\|_{H^2(\Sigma)}\leq c\). Then, we deduce that there exists a function \(v_k\in H^{2}(\Sigma)\) such that, up to a subsequence,
\[
\tilde u_{\varepsilon,k}\rightharpoonup v_k
\ \text{in }H^{2}(\Sigma),
\ \text{and hence strongly in }H^{1}(\Sigma).
\]

In equation \eqref{51}, by selecting a test function 
\[
\varphi(s,t)=\psi(s),
\ s\in\bigl(0,|\partial\Omega|\bigr),\
\psi\in H^{2}_{p}\bigl(0,|\partial\Omega|\bigr).
\]
and considering \eqref{49} and convergences in Section \ref{ss:3.2}, taking the limit as $\varepsilon \rightarrow 0$, we obtain  
\begin{equation}\label{52}
\int_{\Sigma}\left(
\partial^2_{ss}v_k\partial^2_{ss}\psi
+2\kappa(s)^{2}\partial_{s}v_k\partial_{s}\psi
\right)dsdt
=
2\lambda_k\int^{|\partial\Omega|}_{0}v_k\psi ds.
\end{equation}
Note that all the functions appearing in \eqref{52} are constant in \(t\). Since $\psi\in H^{2}_{p}\bigl(0,|\partial\Omega|\bigr)$ is an arbitrary function, the \eqref{52} is the weak formulation of the following problem: 
\[
\begin{cases}
v^{(4)}_k-2\bigl(\kappa^2v_k'\bigr)'=2\lambda_kv_k,
&\text{in }(0,|\partial\Omega|),\\
v_k^{(l)}(0)=v_k^{(l)}(|\partial\Omega|),
&l=0,1,2,3.
\end{cases}
\]

By re-writing \eqref{vainnikko1} in terms of $u_{\varepsilon , k}$, one can easily deduce that, possibly passing to a subsequence,
$\tilde u_{\varepsilon,k}(s,0)\rightarrow u_k(s)$ in $L^2((0,|\partial\Omega|))$ as $\varepsilon \to 0$, hence $u_k=v_k$ and the proof is complete. 

\begin{remark}
One can also allow the tubular neighborhood \(\omega_{\varepsilon}\) to have variable thickness.  Specifically, fix a smooth function \(g:\partial\Omega\to(0,1)\), and define
\[
\omega_{\varepsilon,g}:=
\bigl\{x\in \omega_{\varepsilon} : 0 < \mathrm{dist}(x,\partial\Omega) < \varepsilon g(s(x))\bigr\},
\]
for all \(\varepsilon\in(0,\varepsilon_0)\), where \(s(x)\) is the nearest point on \(\partial\Omega\) to \(x\). The computations can be carried out exactly as in the previously.  Repeating the same estimates shows that the limit problem becomes
\[
\begin{cases}
(gu'')''-2(\kappa^{2}gu')'
=\lambda u,
& \ \text{in} \ (0,|\partial\Omega|),\\
u^{(k)}(0) = u^{(k)}\bigl(|\partial\Omega|\bigr),
& k = 0,1,2,3,
\end{cases}
\]
in the unknowns \(u\) and \(\lambda\).  For more details on variable‐width limits, see \cite{PDL}, \cite{BD}.    
\end{remark}

\section*{Acknowledgements}
 This research was funded by the Science Committee of the Ministry of Science and Higher Education of the Republic of Kazakhstan (Grant No. BR31714735).
 
\subsection*{Conflict of interest}
This work does not have any conflicts of interest.


\small
\begin{thebibliography}{99}

\bibitem{JMA}
Arrieta, J.M., Carvalho, A.N., Losada-Cruz, G.: Dynamics in dumbbell domains I. Continuity of the set of equilibria. \textit{J. Differ. Equ.} \textbf{231}, 551--597 (2006)

\bibitem{EA}
Arrieta, J.M., López-Fernández, M., Zuazua, E.: Approximating travelling waves by equilibria of non-local equations. \textit{Asymptot. Anal.} \textbf{78}(3), 145--186 (2012)

\bibitem{PDL}
Arrieta, J.M., Ferraresso, F., Lamberti, P.D.: Spectral analysis of the biharmonic operator subject to Neumann boundary conditions on dumbbell domains. \textit{Integr. Equ. Oper. Theory} \textbf{89}, 377--408 (2017)

\bibitem{VP}
Arrieta, J.M., Villanueva-Pesqueira, M.: Elliptic and parabolic problems in thin domains with doubly weak oscillatory boundary. \textit{Commun. Pure Appl. Anal.} \textbf{19}(4), 1891--1914 (2020)

\bibitem{JM}
Arrieta, J.M., Nakasato, J.M., Pereira, M.C.: The $p$-Laplacian equation in thin domains: the unfolding approach. \textit{J. Differ. Equ.} \textbf{274}, 1--34 (2021)

\bibitem{Nakasato}
Arrieta, J.M., Nakasato, J.C., Villanueva-Pesqueira, M., Panasenko, G., Piatnitski, A.: Homogenization in 3D thin domains with oscillating boundaries of different orders. \textit{Nonlinear Anal.} \textbf{251}, 113797 (2025)


\bibitem{bel}
Balinsky, A.A., Evans, W.D., Lewis, R.T.: \textit{The analysis and geometry of Hardy's inequality}. Springer, Cham (2015)

\bibitem{FP}
Borisov, D., Freitas, P.: Asymptotics of Dirichlet eigenvalues and eigenfunctions of the Laplacian on thin domains in $\mathbb{R}^d$. \textit{J. Funct. Anal.} \textbf{258}(3), 893--912 (2010)

\bibitem{BD}
Bucur, D., Henrot, A., Michetti, M.: Asymptotic behaviour of the Steklov spectrum on dumbbell domains. \textit{Commun. Partial Differ. Equ.} \textbf{46}(2), 362--393 (2021)

\bibitem{buopro}
Buoso, D., Provenzano, L.: A few shape optimization results for a biharmonic Steklov problem. \textit{J. Differ. Equ.} \textbf{259}(5), 1778--1818 (2015)

\bibitem{AC}
Carvalho, A.N., Piskarev, S.: A general approximation scheme for attractors of abstract parabolic problems. \textit{Numer. Funct. Anal. Optim.} \textbf{27}(7--8), 785--829 (2006)

\bibitem{CD}
Casado-Diaz, J., Luna-Laynez, M., Suarez-Grau, F.J.: A decomposition result for the pressure of a fluid in a thin domain and extensions to elasticity problems. \textit{SIAM J. Math. Anal.} \textbf{52}(3), 2201--2236 (2020)

\bibitem{Eringen1968}
Eringen, A.C.: Theory of Micropolar Elasticity. In: Liebowitz, H. (ed.) \textit{Fracture}. Academic Press, New York (1968)

\bibitem{Eringen1999}
Eringen, A.C.: \textit{Microcontinuum Field Theories: I. Foundations and Solids}. Springer-Verlag, New York (1999)

\bibitem{fed}
Federer, H.: Curvature measures. \textit{Trans. Amer. Math. Soc.} \textbf{93}, 418--491 (1959)

\bibitem{ferpro}
Ferraresso, F., Provenzano, L.: On the eigenvalues of the biharmonic operator with Neumann boundary conditions on a thin set. \textit{Bull. Lond. Math. Soc.} \textbf{55}(3), 1154--1177 (2023)
\bibitem{SS}
Ferrero, A., Lamberti, P.D.: Spectral stability for a class of fourth order Steklov problems under domain perturbations. \textit{Calc. Var. Partial Differ. Equ.} \textbf{58}, Article 33 (2019)

\bibitem{AFPDL}
Ferrero, A., Lamberti, P.D.: Spectral stability of the Steklov problem. \textit{Nonlinear Analysis} \textbf{222}, 112989 (2022)

\bibitem{G}
Friesecke, G., James, R.D., Müller, S.: A theorem on geometric rigidity and the derivation of nonlinear plate theory from three-dimensional elasticity. \textit{Comm. Pure Appl. Math.} \textbf{55}(11), 1461--1506 (2002)

\bibitem{Antonio}
Gaudiello, A., Panasenko, G., Piatnitski, A.: Asymptotic analysis and domain decomposition for a biharmonic problem in a thin multi-structure. \textit{Commun. Contemp. Math.} \textbf{18}(5), 1550062 (2016)

\bibitem{GA}
Gaudiello, A., Gomez, D., Perez-Martinez, M.-E.: Asymptotic analysis of the high frequencies for the Laplace operator in a thin T-like shaped structure. \textit{J. Math. Pures Appl.} \textbf{134}(9), 299--327 (2020)

\bibitem{Grieser}
Grieser, D.: Thin Tubes in Mathematical Physics, Global Analysis, and Spectral Geometry. In: \textit{Analysis on Graphs and its Applications}, vol. 77, pp. 589--614. American Mathematical Society (2008)

\bibitem{Henry}
Henry, D.: \textit{Perturbation of the boundary in boundary-value problems of partial differential equations}. Cambridge University Press, Cambridge (1981)

\bibitem{PDLLP}
Lamberti, P.D., Provenzano, L.: On the explicit representation of the trace space $H^{3/2}$ and of the solutions to biharmonic Dirichlet problems on Lipschitz domains via multi-parameter Steklov problems. \textit{Rev. Mat. Complut.} \textbf{35}, 53--88 (2022)


\bibitem{lll}
Lewis, R.T., Li, J., Li, Y.: A geometric characterization of a sharp Hardy inequality. \textit{J. Funct. Anal.} \textbf{262}(7), 3159--3185 (2012)

\bibitem{Li}
Li, F., Li, D., Freitas, M.M.: Limiting dynamics for stochastic delay p-Laplacian equation on unbounded thin domains. \textit{Banach J. Math. Anal.} \textbf{18}(2), Article 13 (2024)


\bibitem{Miura}
Miura, T.-H.: Thin-film limit of the Ginzburg-Landau heat flow in a curved thin domain. \textit{J. Differ. Equ.} \textbf{422}, 1--56 (2025)


\bibitem{JC}
Nakasato, J.C., Pazanin, I., Pereira, M.C.: Reaction-diffusion problem in a thin domain with oscillating boundary and varying order of thickness. \textit{Z. Angew. Math. Phys.} \textbf{72}(1), Article 5 (2021)

\bibitem{SA}
Nazarov, S.A., Perez, E., Taskinen, J.: Localization effect for Dirichlet eigenfunctions in thin non-smooth domains. \textit{Trans. Amer. Math. Soc.} \textbf{368}(7), 4787--4829 (2016)


\bibitem{Nogueira}
Nogueira, A., Nakasato, J.C.: The p-Laplacian equation in a rough thin domain with terms concentrating on the boundary. \textit{Ann. Mat. Pura Appl.} \textbf{199}(5), 1789--1813 (2020)


\bibitem{MC}
Pereira, M.C., Rossi, J.D., Saintier, N.: Fractional problems in thin domains. \textit{Nonlinear Anal.} \textbf{193}, 111471 (2020)

\bibitem{FS}
Stummel, F.: Perturbation of domains in elliptic boundary-value problems. In: \textit{Lecture Notes in Mathematics}, vol. 503, pp. 110--136. Springer, Berlin (1976)

\bibitem{GMV}
Vainikko, G.M.: Regular convergence of operators and the approximate solution of equations. \textit{J. Sov. Math.} \textbf{16}, 949--980 (1981)

\end{thebibliography}
\end{document}